\newcommand{\R}{\mathbb{R}}
\newcommand{\ab}{\mathbf{a}}
\newcommand{\A}{\mathcal{A}}
\newcommand{\N}{\mathbb{N}}
\newcommand{\Z}{\mathbb{Z}}
\newcommand{\dd}{\mathrm{d}}
\newcommand{\bbo}{\mathds{1}} 
\renewcommand{\geq}{\geqslant}
\renewcommand{\leq}{\leqslant}
\renewcommand{\phi}{\varphi}
\renewcommand{\rho}{\varrho}
\renewcommand{\epsilon}{\varepsilon}
\newtheorem{thm}{Theorem}[section]
\newtheorem{lem}[thm]{Lemma}
\newtheorem{cor}[thm]{Corollary}
\newtheorem{prop}[thm]{Proposition}
\theoremstyle{definition}
\newtheorem{definition}[thm]{Definition}
\newtheorem{exam}[thm]{Example}
\newtheorem{ques}[thm]{Question}
\theoremstyle{remark}
\newtheorem*{rem}{Remark}
\begin{document}

\begin{frontmatter}[classification=text]

\title{Substitution Tilings with Transcendental Inflation Factor}

\author[df]{Dirk Frettl\"oh}
\author[ag]{Alexey Garber\thanks{Partially supported by Alexander von Humboldt Foundation Fellowship}}
\author[nm]{Neil Ma\~nibo\thanks{Funded by the German Academic Exchange Service
(DAAD) via the Postdoctoral Researchers International Mobility Experience (PRIME) Fellowship programme}}

\begin{abstract}
For any $\lambda>2$, we construct a substitution on an infinite alphabet which gives rise to a substitution tiling with inflation factor $\lambda$. In particular, we obtain the first class of examples of substitutive systems with transcendental inflation factors that possess usual dynamical properties enjoyed by primitive substitutions on finite alphabets. We show that both the associated subshift and tiling dynamical systems are strictly ergodic, which is related to the quasicompactness of the underlying substitution operator. We also provide an explicit substitution with transcendental inflation factor $\lambda$. 
\end{abstract}
\end{frontmatter}

\section{Introduction} \label{sec:intro}

%
%


Tile substitutions are frequently used to generate interesting aperiodic tilings like
the Penrose tilings or the Ammann--Beenker tilings in the plane, or aperiodic tilings with icosahedral symmetry in three-dimensional space \cite{BG,Enc}. Moreover, spaces of (bi-)infinite words generated by symbolic substitutions are well studied with respect to their dynamical, topological, and geometrical properties \cite{BG,DOP,FS,Q}. Usually, these substitutions on tiles (resp.\ on letters) use finitely many different tile types (resp.\ finitely many letters). This simple fact, together with some common conditions like primitivity, already implies that the tilings (resp.\ infinite words) will necessarily have an inflation factor $\lambda$ that is an algebraic integer. 


This paper explores new phenomena that arise when one allows for infinitely many prototiles (resp.\ letters). In particular, in this case there exist tile substitutions with transcendental inflation factors. Here we restrict ourselves to tilings in one dimension, hence the prototiles are just intervals. Our main results are the following.

\begin{thm} \label{thm:allfactors}
For every real number $\lambda>2$, there exists a substitution $\varrho$ on an (infinite) compact alphabet $\mathcal{A}$ such that the inflation factor of the associated inflation rule is $\lambda$ and every element of the subshift $(X_{\varrho},\sigma)$ gives rise to a Delone set. 
\end{thm}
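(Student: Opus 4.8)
The plan is to build $\varrho$ by a greedy subdivision of the inflated tiles, keeping all tile lengths inside the fixed compact interval $[1,2]$, so that the Delone property becomes essentially automatic. Concretely, I model a tile by its length $x$ and declare that the inflated tile, of length $\lambda x$, is cut from left to right into $\lfloor \lambda x\rfloor - 1$ copies of the unit tile (length $1$) followed by a single remainder tile of length $R(x)=\lambda x - \lfloor \lambda x\rfloor + 1$. Since $x\in[1,2]$ and $\lambda>2$ force $\lambda x>2$, the count $\lfloor\lambda x\rfloor-1$ is at least $1$, the remainder length lies in $[1,2)$, and the pieces sum to $\lambda x$ by construction; thus $\varrho$ sends a tile of length $x$ to at least two tiles whose lengths again lie in $[1,2)$, and the inflation factor of the associated inflation rule is exactly $\lambda$. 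In particular $\lambda$ may be any real number greater than $2$, hence transcendental.

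Granting this, the Delone property is immediate and uniform over the whole subshift. Any element of $(X_{\varrho},\sigma)$ is a bi-infinite sequence of tiles of lengths in $[1,2]$; laying these tiles along $\mathbb{R}$ yields a tiling whose set $\Lambda$ of tile endpoints has consecutive gaps in $[1,2]$. The lower bound $1$ gives uniform discreteness (distinct points are at distance at least $1$) and the upper bound $2$ gives relative density (every point of $\mathbb{R}$ is within distance $2$ of $\Lambda$), so $\Lambda$ is Delone. This argument uses only the length bounds and not membership in the subshift, so it is robust.

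Next I would check that $X_{\varrho}$ is nonempty by exhibiting a bi-infinite fixed point. By construction $\varrho$ applied to the unit tile begins with the unit tile (as $\lfloor\lambda\rfloor-1\geq 1$), which seeds a right-infinite fixed ray. For the left side I would use a tile $b\in(1,2)$ with $R(b)=b$; such a $b$ exists precisely because $\lambda>2$, since the fixed-point equation amounts to $(\lambda-1)b+1\in\mathbb{Z}$ with $b\in(1,2)$, and the admissible interval for the integer $(\lambda-1)b+1$ has length $\lambda-1>1$ and so contains an integer. Then $\varrho(b)$ ends in a tile of length $b$, seeding a left-infinite fixed ray. Since $R$ maps $[1,2)$ onto $[1,2)$, there is a legal tile with remainder $b$, so the two-letter word ``$b$ followed by the unit tile'' occurs across a substitution junction; hence the seed placing $b$ on $[-b,0]$ and the unit tile on $[0,1]$ is legal, and iterating $\varrho$ (or $\varrho^{2}$) produces a bi-infinite fixed tiling whose coding lies in $X_{\varrho}$.

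The main obstacle is to realize $\mathcal{A}$ as a genuine compact alphabet on which $\varrho$ is continuous, as the surrounding framework requires. The difficulty is structural rather than incidental: the length of a word is locally constant in any topology for which $\varrho$ can be continuous, so a non-integer $\lambda$, for which the piece count $\lfloor\lambda x\rfloor-1$ genuinely jumps as $x$ crosses the points with $\lambda x\in\mathbb{Z}$, forces the alphabet to be totally disconnected. I would therefore not take the naive interval $[1,2]$ but encode a tile by the full symbolic itinerary of its length under the remainder map $R$, which is a piecewise-affine expanding map of slope $\lambda$ of (intermediate) $\beta$-transformation type. Taking $\mathcal{A}$ to be the associated compact, totally disconnected coding space makes the length a continuous function and turns $\varrho$ into the continuous prepend-and-shift operation on codings, with the piece count determined by the initial symbol. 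Verifying that this coding space is compact, that $\varrho$ is continuous and well defined on it, and that the fixed-point construction survives the passage to codings is the technical heart of the argument; everything else reduces to the elementary length bookkeeping indicated above.
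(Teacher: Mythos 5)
Your route is genuinely different from the paper's: the paper writes down an explicit symbolic rule $\varrho([0])=[0]^{a_0}[1]$, $\varrho([i])=[0]^{a_i}[i-1][i+1]$ on a compactification of $\mathbb{N}_0$ inside $\{*,0,\dots,N\}^{\mathbb{Z}}$, and hits a prescribed $\lambda$ by solving $1/\mu=\sum_i a_i\mu^i$ in bounded non-negative integer coefficients, whereas you subdivide geometrically via the remainder map $R$ and only then symbolize. Much of your plan is sound: the length bookkeeping, the observation that continuity of $\varrho$ forces the word length to be locally constant (hence a totally disconnected alphabet), and the passage to the itinerary coding space, on which the piece count is read off the first digit and the length $\sum_{n\geqslant 0}(d_n-1)\lambda^{-(n+1)}$ is continuous; given all that, the Delone property for every element of the subshift is indeed automatic from the bounds $[1,2]$. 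But two genuine gaps remain.

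The serious one is your fixed-point seed. In your substitution every image word is a block of unit tiles followed by a \emph{terminal} remainder letter, so an easy induction shows that the only letters ever followed by another letter inside a supertile $\varrho^{k}(a)$ are the unit tile $u$ and the shifts $\sigma^{i}\mathbf{c}$, $1\leqslant i<k$, of the itinerary $\mathbf{c}$ of $u$; every other letter occurs only as the last letter of a supertile. Since the language is the closure of subwords of supertiles, a two-letter word $b\,u$ is legal only if the coding of $b$ lies in the closure of the shift orbit of $\mathbf{c}$. Your $b$ with $R(b)=b$ has no reason to lie there; and if the alphabet is taken to be that orbit closure (which is exactly what you need for primitivity — with the full coding space of $[1,2)$ as alphabet, primitivity fails), then $b$ need not even be a letter. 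Surjectivity of $R$ on $[1,2)$ only produces some $c$ with $R(c)=b$; to place $b\,u$ across a junction you would need $c$ to occur legally followed by some letter, which is the same problem one step earlier, so the argument is circular. The fix is standard but different from what you wrote: non-emptiness of $X_{\varrho}$ follows from compactness of $\mathcal{A}^{\mathbb{Z}}$ (center increasingly long subwords of $\varrho^{k}(u)$ at the origin and pass to a limit, using that the language is closed), or from primitivity via the proposition quoted from \cite{MRW2} in Section~2 once the alphabet is the orbit closure.

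Second, the ``(infinite)'' clause of the theorem is not guaranteed by your construction. With the orbit-closure alphabet, the alphabet is finite whenever the $R$-orbit of $1$ is eventually periodic: $\lambda=3$ gives $R(1)=1$, the one-letter alphabet $\{u\}$ and $\varrho(u)=uuu$; $\lambda=(3+\sqrt{5})/2$ gives a three-letter alphabet. Eventual periodicity forces $\lambda$ to be algebraic (each $R^{n}(1)$ is a monic integer polynomial in $\lambda$), so you do get an infinite alphabet for every transcendental $\lambda$ — the case of real interest — but not for every $\lambda>2$ as the statement requires. The paper's construction avoids both problems at once: its alphabet $\{[0],[1],[2],\dots\}$ is infinite for every admissible sequence $\mathbf{a}$, and the letter $[i-1]$ sits in the \emph{interior} of $\varrho([i])$, which is precisely what makes the language rich enough for primitivity and recognizability to be verified directly.
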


Since algebraic numbers are countable the following result is immediate.

\begin{cor} \label{cor:transcfactors}
There is a substitution $\varrho$ on an (infinite) compact alphabet $\mathcal A$ such that the inflation factor of the associated geometric substitution $\lambda$ is a transcendental number and every element of the subshift $(X_{\varrho},\sigma)$ gives rise to a Delone set. 
\end{cor}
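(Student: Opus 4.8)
The plan is to obtain Corollary~\ref{cor:transcfactors} as an immediate consequence of Theorem~\ref{thm:allfactors} together with a simple cardinality argument. The key observation is that Theorem~\ref{thm:allfactors} produces, for \emph{every} real number $\lambda>2$, a substitution $\varrho$ on a compact alphabet whose associated inflation factor equals exactly this prescribed $\lambda$. Thus the set of achievable inflation factors contains the entire interval $(2,\infty)\subseteq\R$.

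First I would invoke the classical fact that the set $\overline{\Q}\cap\R$ of real algebraic numbers is countable: algebraic numbers are roots of polynomials with integer (equivalently rational) coefficients, the set of such polynomials is countable, and each polynomial has only finitely many roots, so a countable union of finite sets is countable. Consequently the set of real \emph{transcendental} numbers in the interval $(2,\infty)$ is uncountable, since $(2,\infty)$ itself has the cardinality of the continuum and removing a countable set leaves an uncountable (indeed full-measure) remainder. In particular this set is nonempty.

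Next I would simply select any transcendental $\lambda\in(2,\infty)$---for instance one could even exhibit a concrete such number, although mere nonemptiness suffices for the statement as phrased. Applying Theorem~\ref{thm:allfactors} to this particular $\lambda$ yields a substitution $\varrho$ on an infinite compact alphabet $\A$ whose geometric inflation factor is precisely this transcendental $\lambda$, and for which every element of the subshift $(X_{\varrho},\sigma)$ gives rise to a Delone set. These are exactly the two assertions of the corollary, so the proof concludes.

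I do not anticipate any genuine obstacle here: the entire content of the corollary is carried by Theorem~\ref{thm:allfactors}, and the deduction requires only the countability of the algebraic numbers. The only point meriting a word of care is to confirm that the construction underlying Theorem~\ref{thm:allfactors} indeed realizes the inflation factor as \emph{exactly} the chosen $\lambda$ (rather than merely some algebraic perturbation of it), so that transcendentality transfers directly; this is guaranteed by the precise statement ``the inflation factor of the associated inflation rule is $\lambda$.''
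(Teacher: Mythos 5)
Your proposal is correct and matches the paper's own argument: the paper derives Corollary~\ref{cor:transcfactors} immediately from Theorem~\ref{thm:allfactors} via the countability of the algebraic numbers, which is exactly your cardinality argument. Nothing further is needed.
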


We complement this existence result in Theorem~\ref{thm:TM-transc} by constructing an explicit example with  transcendental inflation factor. This relies on existing results on values of certain transcendental power series on algebraic parameters;
see \cite{Adam} for a survey on Mahler's method that guarantees the transcendence of these values.

The following result ensures that our substitutions are well behaved; that is, they have the usual nice properties one expects from the finite alphabet case. Here and throughout the sequel, $\mathbf{a}=(a_i)_{i\geqslant 0}$ is a sequence of nonnegative integers satisfying certain properties, which we use to construct the substitution $\varrho$; see Section~\ref{sec:comp} for details on the construction.

\begin{thm}\label{thm:dyn}
For the substitution $\varrho:=\varrho_\mathbf a\colon \mathcal A\to \mathcal{A}^{+} $,
\begin{enumerate}
\item[\textnormal{(i)}] The subshift $(X_{\varrho},\sigma)$ is minimal and uniquely ergodic.
\item[\textnormal{(ii)}]  There is a unique (up to scaling) strictly positive, continuous, natural tile length function which turns $\varrho$ into a geometric substitution in $\R$. 
\item[\textnormal{(iii)}] The associated tiling dynamical system $(\Omega_{\varrho},\mathbb{R})$ is minimal and uniquely ergodic.
\end{enumerate}
\end{thm}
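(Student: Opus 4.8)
The plan is to derive all three statements from a single spectral input: the \emph{quasicompactness} of the substitution operator $M=M_\ab$ associated to $\varrho$, acting on the Banach space $C(\A)$ of continuous functions on the compact alphabet $\A$, where $(Mf)(a)=\sum_{b\in\varrho(a)} f(b)$ (the sum ranging over the letters of the word $\varrho(a)$, counted with multiplicity) and the dual $M^{*}$ acts on the space of finite Borel measures on $\A$. First I would record that the construction via $\ab$ makes $M$ bounded and positivity-improving, with essential spectral radius strictly smaller than its spectral radius $\lambda$. Combined with a suitable primitivity condition --- for every nonempty open $U\subseteq\A$ there is a $k$ with $\varrho^{k}(a)$ meeting $U$ for all $a$ --- a Krein--Rutman / Perron--Frobenius argument then yields a simple leading eigenvalue $\lambda$ with a strictly positive continuous eigenfunction $\ell\in C(\A)$, a strictly positive eigenmeasure $\nu$, and, crucially, a spectral gap giving operator-norm convergence
\[
\lambda^{-n}M^{n}\longrightarrow \ell\otimes\nu .
\]

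Part (ii) is then immediate. The eigenfunction equation $M\ell=\lambda\ell$ reads
\[
\lambda\,\ell(a)=\sum_{b\in\varrho(a)}\ell(b),
\]
which is exactly the compatibility condition requiring $\ell$ to be a natural tile length function turning $\varrho$ into a geometric substitution in $\R$ with inflation factor $\lambda$. Uniqueness up to scaling, strict positivity, and continuity of $\ell$ all follow from the simplicity of $\lambda$ and the positivity-improving property established above.

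For part (i), I would obtain minimality from primitivity in the usual manner: primitivity forces every factor occurring in some $\varrho^{k}(a)$ to recur with bounded gaps in every fixed point, so all elements of $X_{\varrho}$ share the same language and each orbit is dense, whence minimality. For unique ergodicity, the operator-norm convergence $\lambda^{-n}M^{n}\to\ell\otimes\nu$ upgrades the classical pointwise frequency computation to a \emph{uniform} one: the frequency of every cylinder exists and is independent of the chosen point of $X_{\varrho}$, so there is a unique $\sigma$-invariant probability measure. This uniformity is precisely what the spectral gap supplies beyond bare primitivity, and it is the step that would fail on a general infinite alphabet absent quasicompactness.

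Finally, part (iii) follows from (i)--(ii) by the standard suspension correspondence: $\ell$ realises each $x\in X_{\varrho}$ as a tiling of $\R$ by intervals, and $(\Omega_{\varrho},\R)$ is the suspension of $(X_{\varrho},\sigma)$ with roof function $\ell$, which is continuous and bounded away from $0$ and $\infty$ by compactness of $\A$ and strict positivity of $\ell$. Minimality of the flow transfers from minimality of the base, and the unique $\R$-invariant probability measure is the normalised product of the unique $\sigma$-invariant measure with Lebesgue measure along the fibres. The hard part will be establishing quasicompactness of $M$ itself: unlike the finite-alphabet case, where the substitution matrix is automatically compact and Perron--Frobenius applies verbatim, here one must exploit the specific structure imposed by $\ab$ to push the essential spectral radius strictly below $\lambda$ --- for instance through an Ionescu-Tulcea--Marinescu type inequality decomposing $M^{n}$ into a contracting part plus a compact (finite-rank-approximable) remainder. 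Once this spectral gap is in hand, everything downstream is a transcription of the classical finite-alphabet arguments.
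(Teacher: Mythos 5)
Your proposal has a genuine gap: it defers exactly the part that constitutes the proof. Theorem~\ref{thm:dyn} is a statement about the specific substitution $\varrho_{\ab}$ built from a sequence $\ab$ satisfying \ref{def:a1}--\ref{def:a3}, and everything in your pipeline (Krein--Rutman, spectral gap, $\lambda^{-n}M^{n}\to\ell\otimes\nu$, suspension) is conditional on hypotheses that you never verify for $\varrho_{\ab}$: primitivity and, above all, quasicompactness, which you explicitly label ``the hard part'' and leave as a strategy sketch. The paper's actual work (Theorem~\ref{thm:primrecquasi}) is precisely this verification: primitivity follows from the fact that $[0]$ appears in every level-$(C+1)$ supertile (using \ref{def:a2} and \ref{def:a3}) together with $[j]\triangleleft\varrho^{j}_{\ab}([0])$; and quasicompactness is established not by an Ionescu-Tulcea--Marinescu-type inequality but by the concrete combinatorial criterion of Proposition~\ref{prop:quasicompact}, checked with $k=C+2$ and $F=\{[0],[1],\ldots,[C+1]\}$: a colouring argument shows that at most $2^{C+2}$ letters of $\varrho^{C+2}_{\ab}(a)$ lie outside $F$, while $r(M)^{C+2}\geqslant 2^{C+2}+1$ by Lemma~\ref{lem:spec-rad}, since within the first $C+1$ substitution steps some letter meets a positive $a_i$ and is therefore replaced by at least three letters. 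Once these hypotheses are in hand, the paper does not re-derive any dynamics: it invokes Theorem~\ref{thm:unique-ergod} of \cite{MRW2} as a black box, which is essentially the machinery your proposal spends its effort reconstructing.

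The second, independent omission is recognizability, which never appears in your proposal. It is a hypothesis of Theorem~\ref{thm:unique-ergod}, and it is also needed in your own route to (i) and (iii): the convergence $\lambda^{-n}M^{n}\to\ell\otimes\nu$ controls letter statistics \emph{inside supertiles}, but to convert this into unique ergodicity of $(X_{\varrho},\sigma)$ one must decompose arbitrary orbit segments of subshift elements into level-$n$ supertiles plus boundary terms, and on a compact infinite alphabet this passage rests on unique supertile decomposition. The paper proves recognizability of $\varrho_{\ab}$ by a concrete argument on runs of zeros: zeros occur only at the beginnings of level-$1$ supertiles, the supertiles $\varrho_{\ab}([0])$ and $\varrho_{\ab}([1])$ contain exactly one non-zero letter, and all other supertiles contain exactly two, which allows one to locate the cuts uniquely (and similarly for level-$k$ decompositions). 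Without this, even granting quasicompactness and primitivity, your argument for (i) and (iii) is not complete.
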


The term \emph{natural tile length} will become more clear later. But one can get an idea from Figure \ref{fig:inflation}: The natural tile lengths allow all inflated tiles (scaled by the inflation factor $\lambda$) to be dissected into copies of the original prototiles $a$ and $b$. Clearly, in this example,  only this particular length ratio $(1+\sqrt{5})/2$ does the job; whereas for instance lengths of 1 and 1, or of 1 and 2, will not. 

Substitutions on infinite alphabets or infinitely many prototiles have been studied before; see \cite{Fer,FS}. However, most known examples are either of constant length (which means all prototiles will just have the same length) or already have inherent pre-defined tile lengths. In \cite{MRW2}, it was asked whether, for substitutions over compact alphabets, there are conditions which guarantee the existence of a natural length function, from which one can recover an associated geometric inflation system. In this context, the following question was posed in \cite{MRW}:

\begin{ques}
Does there exist a primitive substitution on a compact alphabet $\A$ which admits a unique strictly positive (continuous) length function $\ell:\mathcal A\longrightarrow \R_+$ and a transcendental inflation factor $\lambda$?
\end{ques}


We answer this question in the affirmative in this work by Theorem \ref{thm:allfactors}, resp.\ Corollary~\ref{cor:transcfactors}.

 A point set $\varLambda\subseteq \R^d$ is called a {\it Delone set} if it is uniformly discrete and relatively dense; see Section~\ref{sec:infl} for a complete definition. In the specific case $d=1$ (which we mostly discuss in this paper), a point set $\varLambda\subset \R$ is  Delone if and only if the distance between two consecutive points of $\varLambda$ is uniformly bounded from above and from below by two positive numbers.

Delone sets arise in many topics related to mathematical crystallography, to distribution of points in Euclidean spaces, and in applications as well-spaced point sets. We refer to \cite{Cla,10R,FGS,Lag,LW} and references therein for detailed discussions on these sets and related properties. We want to emphasize their importance as point sets that can be used to model atomic structure of solids or ordered matter.

One natural connection between Delone sets and substitutions on finite alphabets is the following. Under certain conditions on the substitution, one can associate to it a strictly positive, bounded, length function $\ell$, which transforms every letter $a$ into a segment of length $\ell(a)$ and preserves the order of letters. 
Additionally, the length function $\ell$ respects the substitution in the sense that every inflated segment can be sliced into segments corresponding to the letters in the alphabet, thus turning the substitution into an inflation rule; see Figure~\ref{fig:inflation}.
This also turns every bi-infinite word into a tiling $\mathcal{T}$ of $\mathbb{R}$ consisting of finitely many distinct prototiles,   We call such a length function a \emph{natural length function}. One can then derive a  Delone set $\varLambda\subset \R$ by identifying a tile with the location of its left endpoint; see Figure~\ref{fig:tiling-Delone} for an example.  

\begin{figure}[!h]
\begin{center}
\includegraphics[scale=0.7]{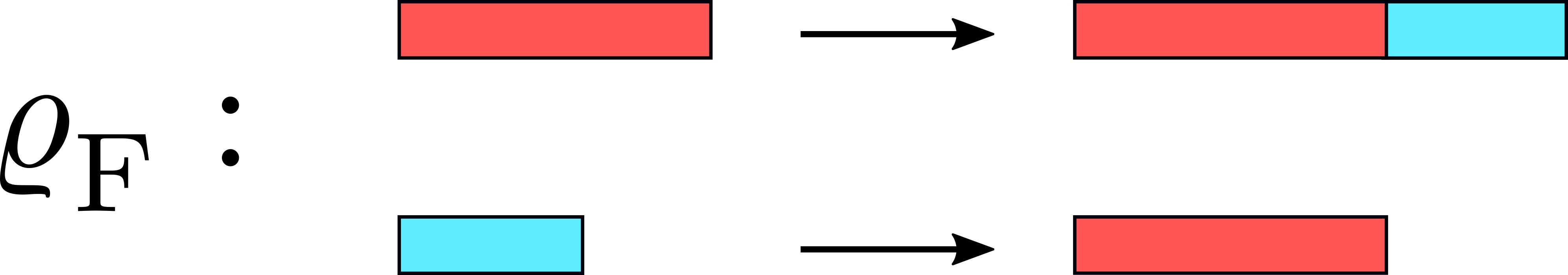} 
\end{center}
\caption{The geometric inflation rule associated to the Fibonacci substitution $\varrho^{ }_{\text{F}}$ given by $a\mapsto ab, b\mapsto a$. Here, the inflation factor is $\lambda=(1+\sqrt{5})/2$, and the natural length function $\ell$ assigns intervals of length $\lambda$ and $1$ to the letters $a$ and $b$, respectively. }\label{fig:inflation}

\end{figure}

\begin{figure}[!h]
\begin{center}
\[
\ldots abaababa|abaababa\ldots\quad \,
\]
\vspace{0.1cm}

\includegraphics[scale=0.75]{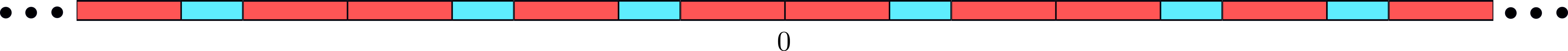} 
\vspace{0.5cm}

\includegraphics[scale=0.75]{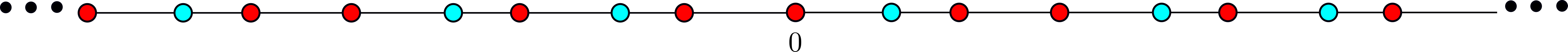}
\end{center} 
\caption{A bi-infinite word fixed under (the square of) the Fibonacci substitution, the corresponding tiling $\mathcal{T}$ of $\mathbb{R}$, and the derived Delone set $\varLambda$. }\label{fig:tiling-Delone}
\end{figure}

With a proper choice of bi-infinite word and origin, the resulting Delone set $\varLambda\subset \R$ exhibits inflation symmetry, that is, $\lambda \varLambda\subset \varLambda$, where $\lambda>1$ is (a positive integer power of) the inflation factor of the initial substitution, and hence algebraic.

Regarding the opposite direction: Let $\varLambda \subset \R^d$ be a Delone set, and let $\lambda \varLambda\subset \varLambda$ for some $\lambda >1$. Lagarias showed in \cite{Lag}: if $\varLambda$ is of finite type (that is, $\varLambda-\varLambda$ is a discrete closed subset of $\R^d$), then $\lambda$ must be an algebraic number.

It is not complicated to construct a Delone set $\varLambda\subset \R$ with a similar inclusion $\lambda\varLambda\subset \varLambda$ for a non-algebraic $\lambda$, but the additional structure of the family of Delone sets with the same ``local structures'' is not guaranteed, as they no longer come from substitutions on finite alphabets, and hence need not possess analogous dynamical properties. 


If the inflation factor $\lambda$ is not only an algebraic integer, but an (irrational) Pisot number
(that is, all algebraic conjugates of $\lambda$ are less than one in modulus), one gets a canonical description of the Delone set $\varLambda$ obtained from the substitution tiling by a cut-and-project scheme: the Galois conjugates of $\lambda$ yield the Minkowski embedding of $\varLambda$ as well as the star-map, which, in turn, yields the window for $\varLambda$, hence all components of the cut-and-project scheme for $\varLambda$. The (infinitely many) Delone sets produced by this cut-and-project scheme are called \emph{model sets}, or \emph{cut-and-project sets}. For details and terminology on cut-and-project schemes see \cite{BG}.  

The Pisot property is also central in the spectral analysis of the associated dynamical systems. For example, Solomyak showed in 
\cite{Sol} that for one-dimensional self-affine tilings with FLC (finite local complexity) with inflation factor $\lambda$, the corresponding dynamical system admits non-trivial eigenfunctions if and only if $\lambda$ is a Pisot number; see also \cite{BT,GK}. This uses the fact that $\alpha\in\mathbb{R}$ corresponds (in a certain sense) to an eigenvalue if and only if $||\alpha\lambda^{n}||\to 0$ as $n\to\infty$, where $||\cdot||$ denotes the distance to the closest integer. When $\lambda$
is algebraic, this only holds when $\lambda$ is a Pisot number. 
The still open Pisot--Vijayaraghavan problem asks whether there exist a transcendental $\lambda$ and a non-zero real $\alpha$ for which such a Diophantine convergence result holds; see \cite{B}. The work in this paper is interesting in view of this problem, and showing that our Delone sets do (or do not) admit non-trivial eigenfunctions might shed light to more restrictions such a transcendental $\lambda$ must satisfy.

%

The paper is organized as follows. In Section \ref{sec:alph}, we recall notions and properties of subshifts and substitutions on compact alphabets. We also recall the properties of the substitution operator (which is the infinite-alphabet analogue of the substitution matrix) and its dynamical consequences for the subshift. 


In Section \ref{sec:comp}, given a bounded sequence $\ab=(a_i)_i$ of nonnegative integers with an additional mild condition, we construct a map $\varrho_{\ab}$ from the set of letters $\N_0=\{[0],[1],[2],[3],\ldots\}$ to the set of finite words over $\N_0$. Here, $\N_0$ represents a (sub)set of letters of our alphabet that corresponds to nonnegative integers. We use the notation $[i]$ for the $i$th letter to distinguish it from the nonnegative integer $i$. We then define an appropriate embedding $\iota_{\ab}(\N_0)$ of $\N_0$ into a shift space $\mathcal{S}$ over a finite alphabet and consider its  compactification $\mathcal{A}:=\overline{\iota_{\ab}(\N_0)}$ (with respect to the topology on $\mathcal{S}$) as our alphabet. We extend the map $\varrho_{\ab}$ to a continuous map on $\mathcal{A}$, which will be our substitution. This allows us to use results of Ma\~{n}ibo, Rust, and Walton \cite{MRW2} on the associated dynamical system and enables us to construct a geometric substitution in $\R$. We prove Theorem \ref{thm:dyn} in this section. 


In Section \ref{sec:infl}, we provide a closed form for the inflation factor $\lambda$ of the substitution in terms of the sequence $\ab=(a_i)_{i\geqslant 0}$ and show that, for every $\lambda>2$, there is a suitable sequence $(a_i)_{i\geqslant 0}$ that gives rise to the inflation factor $\lambda$. This allows us to establish Theorem \ref{thm:allfactors}. In Section \ref{sec:periodic}, we show that if $(a_i)_{i\geqslant 0}$ is a periodic sequence, then the resulting inflation factor is algebraic. We also provide a counterexample for the converse. Moreover, we provide a concrete example of a substitution with transcendental inflation factor in Section~\ref{sec:trans}. We end with some open questions and concluding remarks.

\section{Substitutions on compact alphabets} \label{sec:alph}

In this section, we adapt the notations and definitions in \cite{MRW2}. 
\subsection{Subshifts and substitutions}


Let $\mathcal{A}$ be a compact Hausdorff space. The set $\A$ will be our \emph{alphabet}. Its elements are called \emph{letters}.  Let $\mathcal{A}^n$ be the collection of all words of length $n$ over $\mathcal{A}$, i.e., $w=w_1w_2\cdots w_n$, with $w_i\in\mathcal{A}$. 
We say that a 
word $u$ is a subword of $w$ (which we denote by $u\triangleleft w$) if there exist $1\leqslant i \leqslant j\leqslant n$ such that $u=w_iw_{i+1}\cdots w_{j}= w_{[i,j]}$. Note that the relation of ``being a subword'' can be easily generalized when $w$ is one-sided infinite or bi-infinite.
The set $\mathcal{A}^{+}=\bigcup_{n\geqslant 1} \mathcal{A}^n$ 
of all (non-empty) finite words over $\mathcal{A}$ is also a topological space (since the sets $\mathcal{A}^n$ are naturally endowed with the product topology), where the topology on $\mathcal{A}^{+}$ is that of a disjoint union. Let $\mathcal{A}^{\ast}=\mathcal{A}^{+}\cup \left\{\varepsilon\right\}$, where $\varepsilon$ is the empty word. 
$\A^*$ is a free monoid with concatenation as the binary operation. 

\begin{definition}
A {\it substitution} $\varrho$ is a continuous map from $\mathcal{A}$ to $\mathcal{A}^{+}$. 
\end{definition}

For every letter $a\in \mathcal A$, we call $\varrho^{k}(a)$ the \emph{level-$k$ supertile} of type $a$. 
We define the \emph{language} $\mathcal{L}(\varrho)$ of the substitution to be the set 

\[
\mathcal{L}(\mathcal{\varrho})=\overline{\left\{
u\in \mathcal{A}^{\ast}\mid u \triangleleft \varrho^{k}(a), k\in\mathbb{N}_0,a\in\mathcal{A}\right\}}. 
\]
So, the language $\mathcal{L}(\varrho)$ is the closure of the family of all subwords of supertiles.

This allows one to define a subshift associated to $\varrho$. Let $\mathcal{A}^{\mathbb{Z}}$ be the set of all bi-infinite sequences over $\mathcal{A}$, also called the \emph{full shift} over $\mathcal{A}$, which is compact with respect to the Tychonoff topology. Note that the left shift map $\sigma\colon \mathcal{A}^{\mathbb{Z}}\to \mathcal{A}^{\mathbb{Z}}$, defined pointwise via $\sigma(x)_n=x_{n+1}$, is a homeomorphism. 
A nonempty subset $X\subseteq \mathcal{A}^{\mathbb{Z}}$ is called a \emph{subshift} if it is closed and shift-invariant, i.e., $\sigma(X)\subseteq X$. 
We now define the subshift associated to $\varrho$ via its corresponding language, i.e., $X_{\varrho}:=\left\{x\in\mathcal{A}^{\mathbb{Z}}\mid u\in\mathcal{L}(\varrho), \text{ for all }u\triangleleft x\right\}$; see \cite{MRW2}. It can be shown that $\sigma(X_{\varrho})=X_{\varrho}$. 
A substitution $\varrho$ is said to be \emph{recognizable} if for every power $k\geqslant 1$, every element $x$ of the subshift admits a unique decomposition into level-$k$ supertiles of $\varrho$.

\begin{definition}
A substitution is called \emph{primitive} if for every non-empty open set $U\subset \mathcal{A}$, there exists $k(U)\in \mathbb{N}$ such that $\varrho^{k(U)}(a)$ contains an element in $U$ for all $a\in\mathcal{A}$. 
\end{definition}

Primitivity has several immediate consequences for the corresponding subshift, some of which we mention in the following result, see \cite[Thm. 3.30]{MRW2}. 

\begin{prop}
Let $\varrho$ be a primitive substitution over a compact Hausdorff alphabet. Then 
\begin{enumerate}
\item[\textnormal{(i)}] The subshift $X_{\varrho}$ is non-empty.
\item[\textnormal{(ii)}] The topological dynamical system $(X_{\varrho},\sigma)$ is minimal, i.e., every element $x\in X_{\varrho}$ has a dense orbit. 
\end{enumerate}
\end{prop}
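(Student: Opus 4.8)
The plan is to prove the two assertions separately. For non-emptiness I would argue by compactness of the full shift. Fixing a letter $a\in\mathcal{A}$ and using primitivity applied to two disjoint non-empty open sets (which exist unless $\mathcal{A}$ is a single point, in which case the claim is trivial), one sees that the supertile lengths are unbounded: there is $k$ with $|\varrho^{k}(b)|\geq 2$ for all $b$, so $|\varrho^{mk}(a)|\geq 2^{m}$. I would then position each supertile $\varrho^{mk}(a)$ inside $\mathcal{A}^{\mathbb{Z}}$ with its central letter at the origin; since both the left and right portions grow without bound, every fixed coordinate window is eventually filled by a genuine subword of a supertile. Using that $\mathcal{A}^{\mathbb{Z}}$ is compact (Tychonoff), I extract a coordinatewise convergent subsequence whose limit $x$ satisfies that each finite window $x_{[-n,n]}$ agrees with a subword of some $\varrho^{mk}(a)$, hence lies in $\mathcal{L}(\varrho)$. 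Therefore $x\in X_{\varrho}$ and (i) follows.

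For (ii), the core is a uniform occurrence statement: for every basic open set $\mathcal{V}\subseteq\mathcal{A}^{\mathbb{Z}}$ meeting $X_{\varrho}$, there are an order $N'$ and a window length $R$ so that, for all $b\in\mathcal{A}$ and all $m\geq N'$, every length-$R$ window of $\varrho^{m}(b)$ contains a subword lying in $\mathcal{V}$. To prove this, I write $\mathcal{V}$ as a product of open constraints $U_{-N},\dots,U_{N}$ on the coordinates $-N,\dots,N$ and pick $y\in X_{\varrho}\cap\mathcal{V}$. Since $\mathcal{L}(\varrho)$ is the closure of the genuine subwords and the constraints are open, there is an honest subword $w\triangleleft\varrho^{M}(a)$ with $w\in\mathcal{V}$. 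By continuity of $\varrho^{M}$, the property ``$\varrho^{M}(\cdot)$ contains a subword in $\mathcal{V}$'' is open in its argument, hence holds on a neighborhood $U$ of $a$; primitivity then gives $k$ such that $\varrho^{k}(b)$ contains a letter in $U$ for every $b$, so that $\varrho^{M+k}(b)$ contains a subword in $\mathcal{V}$ for all $b$. Decomposing higher supertiles into level-$(M+k)$ blocks upgrades this to every $m\geq N':=M+k$, and finiteness of $L:=\sup_{b}|\varrho^{N'}(b)|$ — which follows from compactness of $\mathcal{A}$ and local constancy of the length map — yields the gap bound with $R=2L+2N+1$.

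The main obstacle is transferring these occurrences from supertiles to an arbitrary point $x\in X_{\varrho}$: ``containing a subword in $\mathcal{V}$'' is an open condition, and such conditions are not preserved under the limits implicit in the definition of $\mathcal{L}(\varrho)$. I would handle this in two steps. First, using primitivity and continuity of the iterates $\varrho^{m}$, one checks that every element of $\mathcal{L}(\varrho)$ is a limit of subwords of supertiles of arbitrarily high order; hence for $x\in X_{\varrho}$ each window $x_{[t,t+R-1]}$ is a limit of genuine length-$R$ subwords drawn from powers $m\geq N'$, each of which contains an occurrence of $\mathcal{V}$ at one of finitely many positions. Passing to a subsequence with a common position and taking the limit loses the openness but shows that $x$ contains a subword in the closure $\overline{\mathcal{V}}$ within that window; thus $\overline{\mathcal{V}}$ occurs syndetically in every $x\in X_{\varrho}$. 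Second, to deduce minimality, let $x,x'\in X_{\varrho}$ and let $\mathcal{W}$ be a neighborhood of $x'$. Using regularity of the compact Hausdorff space $\mathcal{A}^{\mathbb{Z}}$, I choose a basic open $\mathcal{V}$ with $x'\in\mathcal{V}\subseteq\overline{\mathcal{V}}\subseteq\mathcal{W}$; since $\mathcal{V}$ meets $X_{\varrho}$ at $x'$, the previous step yields $j$ with $\sigma^{j}x\in\overline{\mathcal{V}}\subseteq\mathcal{W}$. Hence the orbit of every $x$ is dense and $(X_{\varrho},\sigma)$ is minimal.
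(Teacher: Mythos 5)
The paper never proves this proposition: it is imported wholesale from \cite[Thm.~3.30]{MRW2}, so there is no in-paper argument to compare against, and your proof stands as a self-contained substitute for that citation. On its merits it is essentially correct, and its two main devices are the right ones for the compact-alphabet setting. In (i), compactness of $\mathcal{A}^{\mathbb{Z}}$ plus unbounded supertile growth produces a point all of whose windows lie in $\mathcal{L}(\varrho)$; note that your one-line growth claim needs a small composition trick, since the exponent in the definition of primitivity depends on the open set: with $k_i:=k(U_i)$ for disjoint non-empty open $U_1,U_2$, the word $\varrho^{k_1+k_2}(b)=\varrho^{k_1}(\varrho^{k_2}(b))=\varrho^{k_2}(\varrho^{k_1}(b))$ contains a letter of $U_1$ and a letter of $U_2$, hence has length at least $2$. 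In (ii), your two key moves are exactly what replaces the standard finite-alphabet argument, where cylinders are clopen and these subtleties are invisible: openness of the coordinate constraints lets you trade a limit word of $\mathcal{L}(\varrho)$ for an honest subword of a supertile (so primitivity and continuity yield syndetic occurrence of $\mathcal{V}$-words in all high-order supertiles), and since only occurrences in $\overline{\mathcal{V}}$ survive the passage back to limits, regularity of the compact Hausdorff product is what converts syndetic occurrence of $\overline{\mathcal{V}}$ into density of every orbit. This is a genuinely useful piece of mathematics that the paper leaves to the literature.

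Two repairs are needed, both routine. First, $\mathcal{A}$ is only assumed compact Hausdorff, hence possibly non-metrizable, so ``convergent subsequence'' must be replaced by ``convergent subnet'' (or cluster point) in (i) and in the common-position extraction in (ii), and points of the closure $\mathcal{L}(\varrho)$ are limits of nets rather than sequences; the arguments go through verbatim (in this paper's concrete application $\mathcal{A}\subset\mathcal{S}$ is metrizable, so sequences would suffice there, but the proposition is stated in full generality). Second, the assertion that every element of $\mathcal{L}(\varrho)$ is a limit of subwords of supertiles of level at least $N'$ is flagged but not proved, and it does require an argument: given $u\triangleleft\varrho^{m}(a)$ and a neighborhood $W$ of $u$, continuity of $\varrho^{m}$ (together with local constancy of $|\varrho^{m}(\cdot)|$) gives a neighborhood $U\ni a$ such that $\varrho^{m}(c)$ contains a word of $W$ at the position of $u$ for every $c\in U$; primitivity gives $k$ so that $\varrho^{k}(b)$ meets $U$ for every $b$, and applying $\varrho^{m+k}$ to a letter of $\varrho^{j}(b)$ with $j\geq N'-m-k$ pushes the level above $N'$ while preserving the near-occurrence of $u$. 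With these two patches your proof is complete.
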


\subsection{The substitution operator}

Let $\varrho\colon \mathcal{A}\to \mathcal{A}^{+}$ be a substitution over a compact Hausdorff alphabet. One can associate to it a substitution operator $M:=M_{\varrho}$ on the space $C(\mathcal{A})$ of real-valued continuous functions over $\mathcal{A}$
via
\[
Mf(a)=\sum_{b\,\triangleleft\,\varrho(a)} f(b),
\]
where the summation goes over all letters $b$ and $\varrho(a)$ is seen as a multiset, i.e. the summation takes into account multiplicities of letters in $\varrho(a)$ as well. 
This is the compact alphabet analogue of the transpose of the substitution matrix in the finite alphabet setting. This is a bounded operator because the length of $\varrho(a)$ is bounded. 
Let $\mathcal{K}$ be the positive cone of $C(\mathcal{A})$, i.e., all functions with $f(a)\geqslant 0$ for all $a\in\mathcal{A}$. One has $M\mathcal {K}\subseteq \mathcal{K}$, that is, $M$ is a positive operator on $C(\mathcal{A})$. 

The generalization of the Perron--Frobenius eigenvalue in this setting is the spectral radius $r(M)$ of $M$. Since $M$ is positive, $r(M)$ is always in the spectrum of $M$ but it is not guaranteed that it is an eigenvalue.
We refer the reader to \cite{Sch} for a background on positive operators on Banach lattices. 
 For the substitution operator, one has the following bounds for $r(M)$. 

\begin{lem}[\cite{MRW2}, Lemma 4.23]\label{lem:spec-rad}
For all $k\in\mathbb{N}$ one has
\[
\min_{a\in\mathcal{A}} |\varrho^{k}(a)|\leqslant r(M)^{k}\leqslant \max_{a\in\mathcal{A}} |\varrho^{k}(a)|. 
\]
\end{lem}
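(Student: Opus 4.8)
The plan is to derive both inequalities from the action of $M$ on the constant function $\bbo$, combined with Gelfand's formula $r(M)=\lim_{n\to\infty}\|M^n\|^{1/n}$ for the spectral radius. First I would prove, by induction on $k$, the iteration formula
\[
(M^k f)(a)=\sum_{b\in\varrho^k(a)} f(b)\qquad(f\in C(\mathcal{A})),
\]
the base case being the definition of $M$ and the inductive step using that, as a multiset, $\varrho^{k+1}(a)=\bigsqcup_{c\in\varrho(a)}\varrho^k(c)$. Evaluating at $f=\bbo$ yields the key identity $(M^k\bbo)(a)=|\varrho^k(a)|$. Because $M$, and hence each $M^k$, is a positive operator on $C(\mathcal{A})$ equipped with the supremum norm and $\|\bbo\|_\infty=1$, the operator norm is attained at $\bbo$: for $\|f\|_\infty\leqslant1$ one has $-\bbo\leqslant f\leqslant\bbo$, so positivity gives $|M^k f|\leqslant M^k\bbo$ pointwise and thus $\|M^k f\|_\infty\leqslant\|M^k\bbo\|_\infty$, with equality at $f=\bbo$. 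Consequently
\[
\|M^k\|=\|M^k\bbo\|_\infty=\max_{a\in\mathcal{A}}|\varrho^k(a)|.
\]

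The upper bound is then immediate: by the spectral mapping property $r(M)^k=r(M^k)$ together with the general estimate $r(M^k)\leqslant\|M^k\|$, we obtain $r(M)^k\leqslant\max_{a\in\mathcal{A}}|\varrho^k(a)|$.

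For the lower bound, set $m_k:=\min_{a\in\mathcal{A}}|\varrho^k(a)|$. I would first observe the supermultiplicativity $m_{k+l}\geqslant m_k m_l$, which follows from $|\varrho^{k+l}(a)|=\sum_{b\in\varrho^k(a)}|\varrho^l(b)|\geqslant m_l\,|\varrho^k(a)|\geqslant m_k m_l$ and taking the minimum over $a$. Iterating gives $m_k^{\,n}\leqslant m_{nk}\leqslant\max_{a\in\mathcal{A}}|\varrho^{nk}(a)|=\|M^{nk}\|$, whence $m_k\leqslant\|M^{nk}\|^{1/n}=\bigl(\|M^{nk}\|^{1/(nk)}\bigr)^{k}$. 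Letting $n\to\infty$ and applying Gelfand's formula along the subsequence $(nk)_n$ yields $m_k\leqslant r(M)^k$, completing the proof. The hard part is precisely this final passage to the limit: the single-exponent estimates only control $m_k$ and $\max_{a}|\varrho^k(a)|$ for each fixed $k$, and it is the norm identification $\|M^{nk}\|=\max_{a}|\varrho^{nk}(a)|$ from the first step that makes Gelfand's formula applicable and converts the supermultiplicative growth of $(m_k)_k$ into the spectral radius.
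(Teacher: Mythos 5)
Your proof is correct and complete. Note that the paper does not prove this lemma at all --- it is imported verbatim from \cite{MRW2} --- so there is no in-paper argument to compare with; your route (the identity $(M^k\bbo)(a)=|\varrho^k(a)|$, the norm identification $\|M^k\|=\|M^k\bbo\|_\infty$ by positivity of $M^k$, spectral mapping plus $r(M^k)\leqslant\|M^k\|$ for the upper bound, and supermultiplicativity of $m_k$ combined with Gelfand's formula along the subsequence $(nk)_n$ for the lower bound) is the standard one and is essentially the argument of the cited reference. One small point you use implicitly: $a\mapsto|\varrho^k(a)|=(M^k\bbo)(a)$ is continuous (lengths are locally constant because $\varrho\colon\mathcal{A}\to\mathcal{A}^{+}$ is continuous and $\mathcal{A}^{+}$ carries the disjoint-union topology), so the max and min in the statement are indeed attained on the compact alphabet $\mathcal{A}$, as your appeal to compactness requires.
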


An operator $T$ with $r(T)=1$ over a Banach space  is called \emph{quasicompact} if there exists $k\in\mathbb{N}$ and a compact operator $K$ such that $\|T^k-K\|<1$, where $\| \cdot \|$ is the operator norm induced by the sup-norm on $C(\mathcal{A})$.

\begin{prop}[\cite{MRW2}, Theorem 6.8(1)] \label{prop:quasicompact}
Let $\varrho$ be a primitive substitution on a compact Hausdorff alphabet $\mathcal{A}$ for which there exists $k\in\mathbb{N}$ and a finite subset $F\subset \mathcal{A}$ of isolated points such that
\[
\left|\left\{b\in\varrho^{k}(a)\mid b\notin F\right\}\right|<r(M)^k
\]
for all $a\in\mathcal{A}$. Then the normalized substitution operator $T:=M/r(M)$ is quasicompact. 
\end{prop}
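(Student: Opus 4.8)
The plan is to split the $k$th power $M^{k}$ into a finite-rank operator carried by $F$ and a positive remainder whose norm is strictly smaller than $r(M)^{k}$, and then to normalize by $r(M)^{k}$. Write $F=\{c_1,\dots,c_m\}$. The first step is to record that, since each $c_j$ is an isolated point, the singleton $\{c_j\}$ is clopen, so the indicator $\bbo_{\{c_j\}}$ lies in $C(\mathcal A)$. Because $M^{k}$ is bounded on $C(\mathcal A)$, the multiplicity function $n_{c_j}:=M^{k}\bbo_{\{c_j\}}$, which sends $a$ to the number of occurrences of $c_j$ in $\varrho^{k}(a)$, is again continuous. I would then define $K\colon C(\mathcal A)\to C(\mathcal A)$ by $Kf=\sum_{j=1}^{m} f(c_j)\,n_{c_j}$, so that $Kf(a)=\sum_{b\in\varrho^{k}(a),\,b\in F} f(b)$. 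Its range lies in $\operatorname{span}\{n_{c_1},\dots,n_{c_m}\}$, hence $K$ has rank at most $m$ and is in particular compact.

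Next I would set $K':=K/r(M)^{k}$, which is legitimate since $r(M)\geq 1>0$ by Lemma~\ref{lem:spec-rad}, and put $R:=M^{k}-K$, so that $Rf(a)=\sum_{b\in\varrho^{k}(a),\,b\notin F} f(b)$ and $T^{k}-K'=R/r(M)^{k}$. The operator $R$ is positive, being a restricted sum over a subset of occurrences. For a positive operator on $C(\mathcal A)$ one has $\|R\|=\|R\bbo\|_\infty$, where $\bbo$ is the constant function $1$: indeed $-\bbo\leq f\leq \bbo$ forces $|Rf|\leq R\bbo$ pointwise by positivity, giving $\|R\|\leq\|R\bbo\|_\infty$, and the reverse inequality is trivial. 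Here $R\bbo(a)=\bigl|\{b\in\varrho^{k}(a):b\notin F\}\bigr|$, which is exactly the quantity appearing in the hypothesis.

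The final step is to promote the pointwise hypothesis to a uniform bound. The function $R\bbo$ is continuous and integer-valued on the compact space $\mathcal A$, hence it takes only finitely many values and attains its supremum. Each of these finitely many integers is strictly less than $r(M)^{k}$ by assumption, so $\|R\|=\max_{a\in\mathcal A} R\bbo(a)<r(M)^{k}$. Dividing by $r(M)^{k}$ yields $\|T^{k}-K'\|=\|R\|/r(M)^{k}<1$, and since $r(T)=r(M)/r(M)=1$, this is precisely the defining condition for quasicompactness of $T$.

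I expect the main obstacle to be this last upgrade: the assumption only supplies a \emph{strict} inequality at each individual letter, and strict pointwise inequalities are in general not uniform on a compact space. The point that saves it is that $R\bbo$ is integer-valued, so the combination of integer-valuedness with compactness of $\mathcal A$ collapses the supremum to a maximum over finitely many integers and thereby preserves the strict inequality. The other place requiring care is the compactness of $K$, where the hypothesis that the points of $F$ are \emph{isolated} is exactly what makes the indicators $\bbo_{\{c_j\}}$ continuous and hence the multiplicity functions $n_{c_j}$ genuine elements of $C(\mathcal A)$.
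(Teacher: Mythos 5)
Your proof is correct. The paper itself does not prove this proposition --- it is imported from \cite{MRW2} --- so there is no internal proof to compare against; your decomposition $M^{k}=K+R$, with $K$ the finite-rank operator carried by the isolated letters of $F$ (compactness of $K$ resting on the indicators $\bbo_{\{c_j\}}$ being continuous) and $\|R\|=\|R\bbo\|_{\infty}<r(M)^{k}$, is exactly the natural argument, and it matches the strategy of the cited reference. One small remark: the ``main obstacle'' you flag at the end is not actually an obstacle, since $R\bbo=M^{k}\bbo-K\bbo$ is continuous and $\mathcal{A}$ is compact, the supremum of $R\bbo$ is attained at some $a_{0}\in\mathcal{A}$, and the pointwise strict inequality $R\bbo(a_{0})<r(M)^{k}$ already yields the uniform bound; your integer-valuedness observation is a valid alternative route to the same conclusion, but continuity plus compactness suffices.
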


\begin{thm}[\cite{MRW2}, Theorem 6.7(3,4,5)]\label{thm:unique-ergod}
Let $\varrho$ be a primitive substitution on a compact Hausdorff alphabet. Suppose further that $T=M/r(M)$ is quasicompact. Then the following hold
\begin{enumerate}[label={{\rm (\roman*)}}]
\item $\varrho$ admits a unique natural tile length function which is strictly positive.
\item The corresponding tiling dynamical system $(\Omega_{\varrho},\mathbb{R})$ is uniquely ergodic.
\item The corresponding symbolic dynamical system $(X_{\varrho},\sigma)$ is  uniquely ergodic. 
\end{enumerate}
\end{thm}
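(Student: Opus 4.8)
The plan is to exploit the quasicompactness of $T = M/r(M)$ to obtain a Perron--Frobenius--Krein--Rutman structure for the positive operator $M$ on $C(\mathcal{A})$, and then to read off the three assertions from the resulting spectral decomposition. First I would analyze the peripheral spectrum of $T$. Since $T$ is quasicompact with $r(T)=1$, a Riesz-type decomposition for power-compact perturbations (in the spirit of the Ionescu-Tulcea--Marinescu theorem) guarantees that the part of the spectrum on the unit circle consists of finitely many eigenvalues of finite algebraic multiplicity, separated by a genuine spectral gap from the remainder of the spectrum, which lies in a disk of radius strictly less than $1$. Using positivity of $T$ together with primitivity of $\varrho$ (which plays the role of irreducibility), I would argue as in the Krein--Rutman theorem that $1$ is an algebraically simple eigenvalue admitting a strictly positive eigenfunction $\ell\in C(\mathcal{A})$, and that it is the \emph{only} point of the peripheral spectrum. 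Primitivity is what forces strict positivity of the eigenfunction (every letter eventually occurs in every supertile, so no length can vanish) and what excludes nontrivial peripheral eigenvalues.

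The eigenfunction $\ell$ is exactly the desired length function. Indeed, the eigenvalue equation $M\ell = r(M)\,\ell$ reads, for each letter $a$,
\[
r(M)\,\ell(a) = \sum_{b\in\varrho(a)} \ell(b),
\]
which says precisely that an inflated tile of type $a$, of length $r(M)\,\ell(a)$, is partitioned into the tiles of $\varrho(a)$ carrying their assigned lengths; this is the defining property of a natural tile length function with inflation factor $\lambda = r(M)$. Algebraic simplicity of the eigenvalue yields uniqueness up to scaling, while the Krein--Rutman argument supplies strict positivity and continuity, proving (i). That $\lambda = r(M) > 1$ (so the lengths genuinely expand) follows from the growth of supertile lengths via Lemma~\ref{lem:spec-rad}.

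For the two ergodicity statements, the decisive consequence of the spectral gap is that $T^{n}$ converges \emph{in operator norm}, at a geometric rate, to the rank-one projection $P = \ell\otimes\nu$, where $\nu$ is the strictly positive left Perron eigenfunctional, normalized to a Borel probability measure on $\mathcal{A}$. The key feature is \emph{uniformity over the whole alphabet}: for every $f\in C(\mathcal{A})$ one has $T^{n}f \to \bigl(\int f\,d\nu\bigr)\ell$ uniformly on $\mathcal{A}$. Translated into patch counting, this says that the frequency of any fixed word $u$ inside the level-$k$ supertile $\varrho^{k}(a)$ converges as $k\to\infty$ to a limit independent of the seed letter $a$, uniformly in $a$. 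Uniform existence of patch frequencies is exactly the criterion (in the spirit of Oxtoby) for unique ergodicity of the minimal system $(X_{\varrho},\sigma)$, giving (iii); here recognizability enters to ensure that these supertile statistics really compute the frequencies of words occurring in points of the subshift. Statement (ii) then follows by transporting unique ergodicity from the base to the suspension: $\Omega_{\varrho}$ is the mapping torus of $(X_{\varrho},\sigma)$ under the roof function $x\mapsto \ell(x_{0})$, with $\R$ acting by the suspension flow, and recognizability makes this identification a conjugacy off a null set. As the roof function is continuous and, by (i), bounded away from $0$ and $\infty$, there is a standard affine bijection between $\sigma$-invariant probability measures on $X_{\varrho}$ and flow-invariant probability measures on $\Omega_{\varrho}$, so uniqueness of the former forces uniqueness of the latter.

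The main obstacle is the first step: assembling the full Perron--Frobenius--Krein--Rutman package in the infinite-dimensional, compact-alphabet setting, and in particular showing that the peripheral spectrum collapses to the single simple eigenvalue $1$ with a strictly positive eigenfunction. In finite dimensions this is classical, but here it genuinely rests on the spectral gap furnished by quasicompactness (Proposition~\ref{prop:quasicompact}), which is precisely what upgrades weak or pointwise convergence of frequencies to the uniform, operator-norm convergence that unique ergodicity over a continuum of letters requires.
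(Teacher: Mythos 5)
The paper offers no proof of this statement: it is imported verbatim from \cite{MRW2}, so the only meaningful comparison is with the argument given in that reference. Your sketch reproduces essentially that argument --- quasicompactness of $T=M/r(M)$ gives the Riesz/spectral-gap decomposition, positivity plus primitivity (via the Krein--Rutman-type theory for irreducible positive operators in \cite{Sch}) collapse the peripheral spectrum to the simple eigenvalue $1$ with a strictly positive continuous eigenfunction $\ell$, whose eigenvalue equation is precisely the defining property of the natural length function; operator-norm convergence of $T^{n}$ to the rank-one projection $\ell\otimes\nu$ then gives uniform convergence of supertile statistics (with recognizability linking these to the subshift) and hence unique ergodicity of $(X_{\varrho},\sigma)$, and the suspension with roof $x\mapsto\ell(x_{0})$ transfers this to $(\Omega_{\varrho},\mathbb{R})$ --- so your proposal is correct in outline and follows the same route as the cited source.
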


\section{The substitution construction and its dynamical properties} 
\label{sec:comp}

%
%

\subsection{Pre-substitution}

Let $\mathbf a=(a_i)_{i\geqslant 0}=a_0,a_1,a_2,\ldots$ be a sequence of nonnegative integers that satisfies the following additional properties.

\begin{enumerate}[label={(A\arabic*)}]
\item \label{def:a1} The sequence $\mathbf a$ is bounded, where we set $N:=\max a_i$.
\item \label{def:a2} $a_0\neq 0$.
\item \label{def:a3} The runs of zeros in $\mathbf a$ are bounded. That is, there exists an integer $C>0$ such that if $a_i=a_{i+1}=\ldots=a_{i+k}=0$, then $k<C$.
\end{enumerate}


\begin{definition} \label{def:substitution}

Let $\mathbf a\in \left\{0,1,\ldots,N\right\}^{\mathbb{N}_0}$ be given and let $[i]\in\mathbb{N}_0$ be a letter in our pre-alphabet. We define the {\it pre-substitution} $\varrho^{ }_\mathbf a\colon \N_0\to \N_0^{+}$ via
$$
\begin{array}{l}
\varrho^{ }_\mathbf a([0])=[0]^{a_0}[1], \text{ and}\\
\varrho^{ }_\mathbf a([i])=[0]^{a_i}[i-1][i+1] \text{ for }i>0.
\end{array}
$$
\end{definition}

Here $[0]^{a_i}$ denotes $a_i$ zeros in a row.
We call the rule $\varrho^{ }_{\mathbf{a}}$ a pre-substitution because we will later extend it to a full substitution on a compact alphabet. This is possible if one embeds $\mathbb{N}_0$ properly. 
To this pre-substitution, one can associate the corresponding infinite matrix 
$$\mathbf A=\left( 
\begin{array}{cccccc}
a_0& a_1+1& a_2 & a_3 & a_4 & \dots\\
1 & 0 & 1 & 0 & 0 & \ldots\\
0 & 1 & 0 & 1 & 0 & \ldots\\
0 & 0 & 1 & 0 & 1 & \ldots\\
0 & 0 & 0 & 1 & 0 & \ldots\\
\vdots & \vdots & \vdots & \vdots & \vdots & \ddots
\end{array}
\right),$$
whose entries are defined by $\mathbf{A}_{ij}$ being the number of $[i]$ in $\varrho_{\mathbf{a}}([j])$. This matrix will become relevant later when we compute for statistical and geometric properties associated to $\varrho$. 
This construction can be seen as a generalization of \cite[Sec~4.4]{MRW}. This example is a specific case of such a substitution where one has $a_i=1$ for all $i\geq 0$.

\subsection{Compactification and its properties}

In order to use the results of \cite{MRW2}, the alphabet has to be compact in some topological space. Here, we use the boundedness of the entries of $\mathbf{a}$ to define an appropriate compactification of $\mathbb{N}_0$.

Consider the full shift $\mathcal S=\{*,0,1,2,\ldots,N\}^\Z$ over $N+2$ letters. This is a compact topological space under the local topology.  More specifically, for every $i\geq 0$, the $i$th non-trivial neighborhood (also called a cylinder set) of $\mathbf b$ in $\mathcal S$ consists of those elements of $\mathcal S$ that have the same subsequence $b_{-i}\ldots b_{-1}\fbox{$b_0$}b_1\ldots b_{i}$ around the origin. Here and in the sequel, the boxed term will always specify the location of the origin. 


It is well known that $\mathcal S$ is a metric space where the distance $d_\mathcal S(\mathbf b,\mathbf b')$ between $\mathbf b,\mathbf b'\in \mathcal S$ is given by $\frac{1}{2^{i}}$, where $i$ is the smallest number in  $\N_0$ such that the $i$th nontrivial neighborhoods of $\mathbf b$ and $\mathbf b'$ are different. In particular, if $\mathbf b$ and $\mathbf b'$ have different letters at the origin then $i=0$ and $d_\mathcal S(\mathbf b,\mathbf b')=1$.


Fix a sequence $\mathbf a\in \left\{0,1,\ldots,N\right\}^{\mathbb{N}_0}$. We now define an embedding $\iota_{\ab}\colon \N_0\to \mathcal{S}$ depending on $\ab$, which we do first by defining $\mathbf 0:=\iota_{\ab}([0])$. The image of $[0]$ is defined pointwise via the rule
\[
\iota_{\ab}([0])_m:=\begin{cases}
a_m,& \text{ for } m\geqslant 0, \\
*,& \text{otherwise}.
\end{cases}
\]
As a bi-infinite sequence, one has 
$$\mathbf 0:=\ldots ***\fbox{$a_0$}a_1a_2a_3\ldots\,.$$  For every $i>0$, we define $\iota_{\ab}([i]):=\sigma^{i}(\mathbf 0)$, i.e., 
$$\mathbf i:=\ldots ***a_0a_1\ldots a_{i-1}\fbox{$a_i$}a_{i+1}a_{i+2}\ldots\,.$$
Here, $\sigma$ is the left shift map as before. 

The closure $\mathcal{A}:=\overline{\iota_{\ab}(\N_0)} \subset \mathcal S$ is the alphabet that we use in our further construction. For an element $\mathbf b\in \mathcal{A}\setminus \iota_{\ab}(\N_0)$, we will denote the corresponding letter of the alphabet as $[\infty_\mathbf b]$. Note that we will always obtain at least one $[\infty_{\mathbf b}]$ from the compactification. Thus the total {\it compact alphabet} $\mathcal{A}$ can be seen as the union of all isolated letters $\iota_{\ab}([i])$ and all relevant infinities $[\infty_\mathbf b]$. For brevity, for every $i\in \N_0$, we identify the image $\iota_{\ab}([i])$ in the alphabet with $[i]$ in the pre-alphabet and use the notation $[i]$ for the isolated letters, so 
$$\mathcal{A}=\{[i]\mid i\in \N_0\}\cup \{[\infty_\mathbf b]\mid\mathbf b\in \mathcal{A}\setminus \iota_{\ab}(\N_0)\}.$$

Before defining the substitution on the letters $[\infty_\mathbf b]$, we prove two properties of the corresponding sequences in $\mathcal S$.

\begin{lem}\label{lem:nostar}
Let $\mathbf{b}\in \mathcal{A}$. 
Then $\mathbf b\in \mathcal{A}\setminus \iota_{\ab}(\N_0)$ if and only if no entry of $\mathbf b$ is $*$.
\end{lem}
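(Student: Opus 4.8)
The plan is to prove the two implications separately, after first pinning down exactly where the symbol $*$ can occur in the approximating letters $\mathbf i=\iota_{\ab}([i])$. From the definition $\mathbf i=\sigma^i(\mathbf 0)$ one has $\mathbf i_m=\mathbf 0_{m+i}$, and $\mathbf 0_m=a_m$ for $m\geq 0$ while $\mathbf 0_m=*$ for $m<0$. Hence $\mathbf i_m=*$ precisely when $m+i<0$, that is, exactly at the positions $m\leq -i-1$. I record this as the key structural observation: the letter $\mathbf i$ carries a $*$ at position $m$ if and only if $m\leq -i-1$, so in particular every $\iota_{\ab}([i])$ has infinitely many $*$'s, all of them at negative positions.

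One implication is then immediate. Since every $\iota_{\ab}([i])$ contains a $*$, any $\mathbf b\in\mathcal A$ with no entry equal to $*$ cannot coincide with any $\iota_{\ab}([i])$, and therefore $\mathbf b\in\mathcal A\setminus\iota_{\ab}(\N_0)$.

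For the converse I would argue by contraposition: assume $\mathbf b\in\mathcal A$ has a $*$ at some position $p$, and show $\mathbf b\in\iota_{\ab}(\N_0)$. As $\mathbf b$ lies in the closure of $\iota_{\ab}(\N_0)$, pick a sequence $\mathbf i_k\to\mathbf b$ in $\mathcal S$. Convergence in the local topology means that for every $n$ the letters $\mathbf i_k$ eventually agree with $\mathbf b$ on the central window from $-n$ to $n$; choosing $n\geq |p|$, for all large $k$ the letter $\mathbf i_k$ has a $*$ at position $p$. By the structural observation this forces $p\leq -i_k-1$, i.e.\ $i_k\leq -p-1$, so the indices $(i_k)$ are bounded. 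A bounded sequence of non-negative integers takes some value $i$ infinitely often; along that constant subsequence $\mathbf i_k=\mathbf i$ converges to $\mathbf b$, whence $\mathbf b=\mathbf i=\iota_{\ab}([i])\in\iota_{\ab}(\N_0)$.

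The only step needing care — and the one I would flag as the main, if modest, obstacle — is translating convergence in $\mathcal S$ into the statement that a $*$ present in the limit already appears in the approximants. This uses that $*$ is a genuine letter of the finite alphabet $\{*,0,\ldots,N\}$ and that cylinder agreement on a window is literal equality of symbols, so a $*$ at a fixed position cannot be approximated by non-$*$ symbols. Once this is made explicit, the boundedness of $(i_k)$ and the collapse of the limit onto a single letter $\iota_{\ab}([i])$ follow at once.
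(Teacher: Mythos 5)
Your proof is correct and follows essentially the same route as the paper's: both rest on the observation that $\sigma^{i}(\mathbf 0)$ carries $*$ exactly at positions to the left of $-i$, so every embedded letter contains a $*$ while the $*$'s escape to $-\infty$ along any sequence converging to a genuine accumulation point. The paper states this in two sentences; your version merely makes explicit the boundedness-of-indices argument (a bounded sequence of indices forces an eventually constant subsequence, hence a limit inside $\iota_{\ab}(\N_0)$) that the paper leaves implicit.
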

\begin{proof}
This follows directly from the definition of $\iota_{\ab}(\N_0)$ as $\iota_{\ab}(\N_0)=\left\{\sigma^{n}(\mathbf{0}),n\in\mathbb{N}_0\right\}$. For every $n$, only the entries to the left of the $(-n)$th position of $\sigma^n(\mathbf 0)$ are $*$s. This means that accumulation points of $\iota_{\ab}(\N_0)$ are not in the $\sigma$-orbit of $\mathbf{0}$  and must be bi-infinite sequences over $\left\{0,1,\ldots,N \right\}$.
\end{proof}

\begin{lem}\label{lem:shift}
The shift map is invertible on the accumulation points. 
That is, if one has $\mathbf b\in \mathcal{A}\setminus \iota_{\ab}(\N_0)$, then $\sigma(\mathbf{b}),\sigma^{-1}(\mathbf{b})\in \mathcal{A}\setminus \iota_{\ab}(\N_0)$. 
\end{lem}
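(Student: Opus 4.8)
The plan is to reduce the statement, via Lemma~\ref{lem:nostar}, to a question about the closure $\mathcal A=\overline{\iota_{\ab}(\N_0)}$. Recall that by Lemma~\ref{lem:nostar} an element of $\mathcal A$ lies in $\mathcal A\setminus\iota_{\ab}(\N_0)$ precisely when none of its entries equals $*$. Since $\sigma$ and $\sigma^{-1}$ only relabel the positions of a sequence, they neither create nor remove occurrences of $*$; hence $\sigma(\mathbf b)$ and $\sigma^{-1}(\mathbf b)$ are $*$-free whenever $\mathbf b$ is. By Lemma~\ref{lem:nostar} it therefore suffices to prove the genuinely nontrivial part, namely that $\sigma(\mathbf b)$ and $\sigma^{-1}(\mathbf b)$ still belong to $\mathcal A$.

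To prove this, I would first fix an approximating sequence. As $\mathbf b\in\mathcal A$ and $\iota_{\ab}(\N_0)=\{\sigma^n(\mathbf 0):n\in\N_0\}$, there are indices $n_k\geqslant 0$ with $\sigma^{n_k}(\mathbf 0)\to\mathbf b$ in $\mathcal S$. The key observation is that, after passing to a subsequence, $n_k\to\infty$: if the $n_k$ assumed only finitely many values, some value would recur infinitely often, yielding a constant subsequence whose limit $\mathbf b$ would then lie in $\iota_{\ab}(\N_0)$, contradicting the hypothesis. With this in hand, the two inclusions follow from the continuity of the shift, which is a homeomorphism of $\mathcal S$. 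Applying $\sigma$ gives $\sigma^{n_k+1}(\mathbf 0)\to\sigma(\mathbf b)$, and every term lies in $\iota_{\ab}(\N_0)$ since $n_k+1\geqslant 0$, so $\sigma(\mathbf b)\in\overline{\iota_{\ab}(\N_0)}=\mathcal A$. Applying $\sigma^{-1}$ gives $\sigma^{n_k-1}(\mathbf 0)\to\sigma^{-1}(\mathbf b)$; here $n_k-1\geqslant 0$ for all sufficiently large $k$, so all but finitely many terms lie in $\iota_{\ab}(\N_0)$ and again $\sigma^{-1}(\mathbf b)\in\mathcal A$.

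The one point requiring care, and the place where the hypothesis $\mathbf b\in\mathcal A\setminus\iota_{\ab}(\N_0)$ is genuinely used, is the claim $n_k\to\infty$: it is exactly this unboundedness that guarantees $n_k-1\geqslant 0$ for large $k$ and thereby keeps the backward-shifted iterates inside the forward orbit $\{\sigma^n(\mathbf 0):n\geqslant 0\}$ defining $\iota_{\ab}(\N_0)$. Everything else is a direct consequence of the continuity of $\sigma^{\pm1}$ together with the $*$-characterization of accumulation points from Lemma~\ref{lem:nostar}, so no explicit analysis of the limit sequence $\mathbf b$ itself is needed.
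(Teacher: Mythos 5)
Your proof is correct and takes essentially the same route as the paper's: approximate $\mathbf b$ by shifts $\sigma^{n_k}(\mathbf 0)$ with indices tending to infinity, apply $\sigma^{\pm1}$, and use continuity of the shift to conclude that the limits lie in $\overline{\iota_{\ab}(\N_0)}=\mathcal A$. Your two extra touches --- justifying via a subsequence argument that $n_k\to\infty$, and invoking Lemma~\ref{lem:nostar} to confirm the shifted limits are genuinely accumulation points rather than elements of $\iota_{\ab}(\N_0)$ --- are points the paper's proof leaves implicit, so they are welcome refinements rather than a different approach.
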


\begin{proof}
Since $\mathbf b$ is an accumulation point, there exists an increasing sequence $(i_n)_{n\geqslant 0}$ of natural numbers such that the sequence $(\mathbf{i}_n)_n$ with ${\bf i}_n:=\sigma^{i_n}({\bf 0})$ converges to $\bf{b}$ in the local topology. Without loss of generality, assume that $i_n>1$ for all $n$. Note that the sequence $\left(\sigma^{i_n-1}(\bf{0})\right)_{n\geqslant 0}$ converges to $\sigma^{-1}(\bf{b})$ and all elements of the sequence are in $\iota_{\ab}(\N_0)$ because they are shifts of $\bf{0}$, which implies $\sigma^{-1}(\bf{b})\in \overline{\iota_{\ab}(\N_0)}$. The claim for $\sigma({\bf{b}})$ can be proved using similar arguments. 
\end{proof}

We then extend the substitution in Definition~\ref{def:substitution} to the image of  all $[\infty_\mathbf b]$ under $\varrho^{ }_\mathbf a$. 
For $$\mathbf b=\ldots b_{-2}b_{-1}\fbox{$b_0$}b_1b_2\ldots \in \mathcal{A}\setminus \iota_{\ab}(\N_0),$$ 
we define
\begin{equation}\label{eq:subs-extend}
\varrho^{ }_\mathbf a([\infty_\mathbf b])=[0]^{b_0}[\infty^{ }_{\sigma^{-1}(\mathbf b)}][\infty^{ }_{\sigma(\mathbf b)}].
\end{equation}

Lemmas \ref{lem:nostar} and \ref{lem:shift} ensure that this substitution is well defined as $b_0$ must be a number from $\{0, 1,\ldots,N\}$ and not $*$, and that the letters $[\infty^{ }_{\sigma(\mathbf b)}]$ and $[\infty^{ }_{\sigma^{-1}(\mathbf b)}]$ exist in $\mathcal{A}$.


\begin{prop}
The map $\varrho^{ }_\mathbf a\colon \mathcal{A}\to \mathcal{A}^{+}$ defined in Def.~\ref{def:substitution} and Eq.~\eqref{eq:subs-extend} is continuous and hence a substitution. 
\end{prop}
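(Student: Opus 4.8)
The plan is to prove continuity of $\varrho_\mathbf a$ directly from the definition of the metric $d_\mathcal S$ and the piecewise definition of $\varrho_\mathbf a$. Since $\mathcal A$ is a metric space and $\mathcal A^+ = \bigcup_{n\geq 1}\mathcal A^n$ carries the product topology on each length-$n$ stratum, and since the isolated letters $[i]$ contribute only singletons (where continuity is automatic), the real content is continuity at the accumulation points $[\infty_\mathbf b]$. I would first observe that the word-length of $\varrho_\mathbf a([\infty_\mathbf b])$ is always $b_0+2$, so to check continuity at $[\infty_\mathbf b]$ it suffices to work within the single stratum $\mathcal A^{b_0+2}$ (after noting length is locally constant near $[\infty_\mathbf b]$, which I address below), and then verify that each of the $b_0+2$ coordinate maps $\mathbf c \mapsto (\text{the }j\text{th letter of }\varrho_\mathbf a(\mathbf c))$ is continuous at $[\infty_\mathbf b]$.

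The key mechanism is that closeness in $\mathcal S$ is governed by agreement on a large central window: if $d_\mathcal S(\mathbf c,\mathbf b)\leq 2^{-i}$ then $\mathbf c$ and $\mathbf b$ agree on positions $-i,\ldots,i$. I would take a sequence $\mathbf c^{(n)}\to[\infty_\mathbf b]$ in $\mathcal A$. By Lemma~\ref{lem:nostar}, for the purpose of the limit we may assume the central entries of $\mathbf c^{(n)}$ are genuine symbols from $\{0,\ldots,N\}$ matching those of $\mathbf b$ on a window that grows with $n$; in particular $c^{(n)}_0 = b_0$ for all large $n$, which already pins down the word-length $c^{(n)}_0+2 = b_0+2$ and confirms that length is eventually constant, so the limit lands in the correct stratum. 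Then the first $b_0$ letters of $\varrho_\mathbf a(\mathbf c^{(n)})$ are all $[0]$, matching $\varrho_\mathbf a([\infty_\mathbf b])$ exactly. The remaining two letters are $[\infty_{\sigma^{-1}(\mathbf c^{(n)})}]$ and $[\infty_{\sigma(\mathbf c^{(n)})}]$ (or the corresponding $[i\pm 1]$ in the isolated case), and I would invoke the fact that $\sigma$ and $\sigma^{-1}$ are homeomorphisms of $\mathcal S$ — continuity of the shift is standard for the local topology — together with Lemma~\ref{lem:shift} guaranteeing these shifted sequences stay in $\mathcal A$, to conclude $\sigma^{\pm 1}(\mathbf c^{(n)}) \to \sigma^{\pm 1}(\mathbf b)$ and hence the last two letters converge to those of $\varrho_\mathbf a([\infty_\mathbf b])$.

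There is one bookkeeping subtlety that I expect to be the main obstacle: the letters $[i]$ and the letters $[\infty_\mathbf b]$ are being treated as a single map into $\mathcal A$, so when $\mathbf c^{(n)}$ ranges over both isolated points $\sigma^{i_n}(\mathbf 0)$ and genuine accumulation points, I must check that the image under $\varrho_\mathbf a$ of an isolated $[i]$ that is $d_\mathcal S$-close to $[\infty_\mathbf b]$ is $\mathcal A^{+}$-close to $\varrho_\mathbf a([\infty_\mathbf b])$. Concretely, $\varrho_\mathbf a([i]) = [0]^{a_i}[i-1][i+1]$ while $\varrho_\mathbf a([\infty_\mathbf b]) = [0]^{b_0}[\infty_{\sigma^{-1}(\mathbf b)}][\infty_{\sigma(\mathbf b)}]$, and I need the identifications $[i-1]=[\infty_{\sigma^{-1}(\mathbf i)}]$-in-the-limit and $[i+1]=[\infty_{\sigma(\mathbf i)}]$-in-the-limit to be compatible, i.e.\ the two cases of the definition must glue continuously. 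This is exactly where the definition of $\iota_\mathbf a$ via $\mathbf i = \sigma^i(\mathbf 0)$ pays off: since $\sigma^{-1}(\mathbf i) = \mathbf{i-1}$ and $\sigma(\mathbf i)=\mathbf{i+1}$ for $i\geq 1$, the formula \eqref{eq:subs-extend} restricted to isolated points literally reproduces Definition~\ref{def:substitution}, so there is no discontinuity at the seam.

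I would therefore structure the proof as follows. First, continuity on the open (discrete) set of isolated letters is trivial, so fix an accumulation point $[\infty_\mathbf b]$ and an approximating sequence $\mathbf c^{(n)}\to \mathbf b$ in $\mathcal A$. Second, use the metric $d_\mathcal S$ to extract that $c^{(n)}_0=b_0$ eventually, fixing the target stratum $\mathcal A^{b_0+2}$ and handling the leading $[0]^{b_0}$ block. Third, apply continuity of $\sigma^{\pm1}$ and Lemma~\ref{lem:shift} to pass to the limit in the final two letters, uniformly across the isolated/accumulation dichotomy via the gluing observation above. Concluding that $\varrho_\mathbf a(\mathbf c^{(n)})\to \varrho_\mathbf a(\mathbf b)$ in $\mathcal A^+$ establishes sequential continuity, which suffices since $\mathcal A$ and $\mathcal A^+$ are metrizable, and hence $\varrho_\mathbf a$ is a substitution in the sense of the definition.
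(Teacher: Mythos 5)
Your proof is correct and takes essentially the same route as the paper's: metric closeness in $\mathcal S$ forces agreement of the central symbol (hence equal word length, the same stratum $\mathcal{A}^{b_0+2}$, and exactly matching leading $[0]$-blocks), while the last two letters are controlled by continuity of $\sigma^{\pm1}$, which is precisely the paper's estimate $d_{\mathcal S}(\sigma^{i\pm1}(\mathbf{0}),\sigma^{\pm1}(\mathbf{b}))\leq \tfrac{1}{2^{m-1}}$. Your explicit handling of the seam between the two case definitions (via $\sigma^{\pm1}(\mathbf{i})=\mathbf{i\pm1}$) and of accumulation points approached by other accumulation points merely spells out what the paper compresses into ``the cases of other letters from $\mathcal A$ are similar.''
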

\begin{proof}
 This follows immediately from the construction. 
In particular, if $d_\mathcal S(\sigma^{i}(\mathbf{0}),{\bf{b}})\leq \frac{1}{2^m}$ for some $m\geq 1$ and $i>0$, then $\mathbf i$ and $\mathbf b$ coincide at least within $m$ letters from the origin. Thus
$d_\mathcal S(\sigma^{i-1}(\mathbf{0}),\sigma^{-1}(\mathbf{b}))\leq\frac{1}{2^{m-1}}$. Similarly,  $d_\mathcal S(\sigma^{i+1}(\mathbf{0}),\sigma(\mathbf{b}))\leq \frac{1}{2^{m-1}}$. 
Additionally, the condition on $\sigma^{i}(\mathbf{0})$ implies $a_i=b_0$. 
This means that the words $\varrho^{ }_{\mathbf{a}}([i])=[0]^{a_i}[i-1][i+1]$ and  $\varrho^{ }_{\mathbf{a}}([\infty_{\mathbf{b}}])=[0]^{b_0}[\infty_{\sigma^{-1}(\mathbf{b})}][\infty_{\sigma(\mathbf{b})}]$ both belong to $\mathcal{A}^{b_0+2}$ and are close in $\mathcal{A}^{+}$ in the disjoint union topology.

The cases of other letters from $\mathcal A$ are similar.
\end{proof}

\begin{thm}\label{thm:primrecquasi}
Let $\bf{a}$ be a sequence of nonnegative integers satisfying conditions \textnormal{\ref{def:a1}--\ref{def:a3}} and $\varrho^{ }_{\bf{a}}$ be the substitution on  $\mathcal{A}$ corresponding to $\bf{a}$.
The substitution $\varrho^{ }_\mathbf a$ satisfies the following.
\begin{enumerate}[label={{\rm (\roman*)}}]
\item $\varrho^{ }_\mathbf a$ is primitive.
\item $\varrho^{ }_\mathbf a$ is recognizable. 
\item The associated (normalized) substitution operator is quasicompact. 
\end{enumerate}
\end{thm}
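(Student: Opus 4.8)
The plan is to establish the three properties in order, since each relies on the explicit combinatorial structure of $\varrho_{\mathbf a}$ and the preceding lemmas about the compactification.

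\textbf{Primitivity.} First I would verify condition (i) using the definition: for every nonempty open $U\subset\mathcal A$, I must produce $k(U)$ so that $\varrho^k(a)$ meets $U$ for all $a\in\mathcal A$. Because the isolated letters $[i]$ form a dense subset and the topology is generated by cylinder sets, it suffices to handle the case where $U$ is a cylinder, i.e. where $U$ contains some $[i]$ (together with nearby accumulation points). The key observation is that the pre-substitution acts like a \emph{nearest-neighbor} rule on the index: $\varrho_{\mathbf a}([i])$ contains $[i-1]$ and $[i+1]$ (and copies of $[0]$), so iterating spreads the support of $\varrho^k([i])$ across a growing range of indices. Concretely, I would show by induction that $\varrho^k([i])$ contains $[j]$ whenever $|i-j|\le k$ and a parity condition is met, and that every letter eventually produces $[0]$ (using \ref{def:a2}, $a_0\neq 0$, so $[0]$ reproduces itself). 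From $[0]$ one climbs up to any $[i]$ in finitely many steps, and by the uniform bound $N$ and condition \ref{def:a3} this number of steps can be taken uniformly over the starting letter. The accumulation letters $[\infty_{\mathbf b}]$ behave identically via \eqref{eq:subs-extend}, so primitivity for the isolated letters transfers by continuity and density.

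\textbf{Recognizability.} This is the step I expect to be the main obstacle, as recognizability is generally delicate and is not automatic even in the finite-alphabet setting. My plan is to exhibit a local rule that recovers the level-$1$ supertile decomposition of any $x\in X_{\varrho}$ from a bounded window. The structural feature to exploit is that every image $\varrho_{\mathbf a}(\text{letter})$ has the form $[0]^{\ast}[\,\cdot\,][\,\cdot\,]$ where the final two letters are \emph{non-zero-indexed consecutive} letters (either $[i-1][i+1]$ or $[\infty_{\sigma^{-1}\mathbf b}][\infty_{\sigma\mathbf b}]$), and in particular each supertile ends with a letter $[j]$ with $j\ge 1$ followed by $[j']$ with $j'\ge 1$; the only letter that can begin a supertile after a block of $[0]$'s is forced. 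I would argue that the positions of letters with index $\ge 1$ act as markers that uniquely locate the supertile boundaries, so that the decomposition into level-$1$ supertiles is unique; recognizability for all powers $k$ then follows by composing the level-$1$ recognizability with itself, or by the same marker argument applied to $\varrho^k$. Care is needed to ensure no ambiguity arises at the accumulation letters, but Lemma~\ref{lem:shift} guarantees the shift structure persists there, so the same marker analysis applies verbatim.

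\textbf{Quasicompactness.} For (iii) I would simply verify the hypothesis of Proposition~\ref{prop:quasicompact}. Take $F=\{[0]\}$, a single isolated point, and $k=1$. For each letter $a$, the number of symbols in $\varrho_{\mathbf a}(a)$ that are \emph{not} equal to $[0]$ is exactly $2$ for $a=[i]$ with $i>0$ and for $a=[\infty_{\mathbf b}]$, and exactly $1$ for $a=[0]$. Hence $|\{b\in\varrho(a): b\notin F\}|\le 2$ for all $a$. So it remains to check $r(M)>2$; since $\lambda=r(M)$ is the inflation factor and the hypothesis of the theorem we are ultimately serving is $\lambda>2$, this bound holds. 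More carefully, I would invoke Lemma~\ref{lem:spec-rad}: since $a_0\ge 1$ every image has length at least $2$, and using \ref{def:a3} a uniform lower bound on $|\varrho^k(a)|$ growing faster than $2^k$ can be extracted, giving $r(M)>2$ and thus the strict inequality $2<r(M)^k$ needed in Proposition~\ref{prop:quasicompact}. With primitivity, recognizability, and quasicompactness in hand, the conclusion follows, and Theorem~\ref{thm:dyn} is then immediate from Theorem~\ref{thm:unique-ergod}.
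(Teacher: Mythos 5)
Your primitivity and quasicompactness plans are workable in outline, but the recognizability step --- which you yourself flag as the main obstacle --- rests on a false structural claim and omits the argument that actually yields uniqueness. It is not true that ``each supertile ends with a letter $[j]$, $j\geq 1$, followed by $[j']$, $j'\geq 1$'': indeed $\varrho_{\mathbf a}([0])=[0]^{a_0}[1]$ has only \emph{one} non-zero letter, and $\varrho_{\mathbf a}([1])=[0]^{a_1}[0][2]=[0]^{a_1+1}[2]$ ends in $[0][2]$, so it also has only one non-zero letter. Consequently supertile boundaries are not ``forced'' by local markers in the way you assert. What is true is that zeros occur only at the start of a supertile, so a maximal block of zeros marks the start of a supertile; but between two consecutive such blocks one sees $[0]^m w_1\cdots w_r$ with all $w_i$ non-zero, and supertiles containing no zeros at all (these occur whenever $a_i=0$, which \ref{def:a1}--\ref{def:a3} allow) leave no marker whatsoever. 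One must decide whether the supertile beginning with the zero-block absorbs one or two of the $w_i$, with the remaining $w_i$ grouped into zero-free supertiles. Uniqueness is a counting/parity argument, which is exactly what the paper supplies: supertiles of types $[0]$ and $[1]$ contain exactly one non-zero letter, every other supertile exactly two, and every zero-free supertile consists of exactly two letters; hence the parity of $r$ determines the grouping uniquely. Without this counting step your proof of (ii) does not go through; moreover the level-$k$ decomposition needs its own treatment (the paper uses occurrences of $\varrho^{k-1}_{\mathbf a}([0])$ as markers), which you only gesture at.

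On (iii), your first justification is circular: the identity $\lambda=r(M)$ is established only in Proposition~\ref{prop:length}, whose proof \emph{uses} quasicompactness to know that $r(M)$ is an eigenvalue with a positive continuous eigenfunction, and the present theorem has no hypothesis ``$\lambda>2$''. Your fallback is the correct fix and must be carried out explicitly: every image has length at least $2$, and by \ref{def:a3} every letter (isolated or accumulation) spawns within $C+1$ steps a letter whose image has at least three letters, so $|\varrho^{C+2}_{\mathbf a}(a)|\geq 2^{C+2}+1$ for all $a$, whence Lemma~\ref{lem:spec-rad} gives $r(M)\geq (2^{C+2}+1)^{1/(C+2)}>2$. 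Once that is in place, your choice $F=\{[0]\}$, $k=1$ does satisfy Proposition~\ref{prop:quasicompact}, since every image contains at most two letters outside $F$; this is actually cleaner than the paper's choice $F=\{[0],[1],\ldots,[C+1]\}$, $k=C+2$, which needs an extra bookkeeping (coloring) argument to bound the letters escaping $F$. Finally, in (i) note that the uniform production of $[0]$ cannot come from descending $[i]\to[i-1]\to\cdots$ (that takes $i$ steps, and never terminates for $[\infty_{\mathbf b}]$); as in the paper, it must come from \ref{def:a3}: among the next $C+1$ entries of the relevant sequence one is positive, so every letter produces $[0]$ within $C+1$ steps, uniformly over the alphabet.
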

\begin{proof}

We first prove that every letter $a\in \mathcal{A}$ contains $[0]$ in its level-$(C+1)$ supertile $\varrho_\mathbf a^{C+1}(a)$, where $C$ is the constant from \ref{def:a3}. For this purpose, note that $\varrho^{ }_\mathbf a([0])$ contains $[0]$ because $a_0\neq 0$ due to \ref{def:a2}. Thus, if a supertile contains $[0]$, then every further application of $\varrho^{ }_\mathbf a$ to that supertile will contain $[0]$ as well.

If $a\in \iota_{\ab}(\N_0)$, then $a=[i]$ for some $i$. Among the integers $a_i,a_{i+1},\ldots,a_{i+C}$, at least one is positive due to condition \ref{def:a3}. Suppose $a_{i+k}>0$, where $0\leq k \leq C$. Then $\varrho_\mathbf a^{k}([i])$ contains the letter $[i+k]$. Since $\varrho_\mathbf a^{k+1}([i])$ contains $[0]$, so does $\varrho_\mathbf a^{C+1}([i])$.

If $a\in \mathcal{A}\setminus \iota_{\ab}(\N_0)$, then $a=[\infty_\mathbf b]$ for an appropriate $\mathbf b=\ldots b_{-2}b_{-1}\fbox{$b_0$}b_1b_2\ldots$. Similarly, among the numbers $b_0,b_1,\ldots,b_C$ at least one is positive because $\mathbf b$ is a limit of some subsequence of $(\sigma^i(\mathbf 0))_i$. After that, the arguments are similar to the previous case as $\varrho_\mathbf a^{k}([\infty_\mathbf b])$ contains $[\infty_{\sigma^k(\mathbf b)}]$.


We proceed with the proofs for the three claims in the theorem.

For primitivity, we need to show that, for every non-empty open set $U$, there exists $k(U)\in\mathbb{N}$ such that for every letter $a\in \mathcal{A}$, $\varrho_{\bf{a}}^{k(U)}(a)$ contains a letter in $U$. 
We consider two types of open sets: the singletons consisting of isolated points $[j],j\in\N_0$, and balls $B_{\varepsilon}([\infty_{\bf{b}}])$ for some accumulation point $\bf{b}$. 

If $[j]$ is an isolated point, the claim  is straightforward for $U=\left\{[j]\right\}$ since $[j]\triangleleft\varrho^{j}_{\bf{a}}([0])$ and hence $[j]\triangleleft\varrho^{j+C+1}_{\bf{a}}(a)$ for any $a\in \mathcal{A}$.

Now let $\bf{b}$ be an accumulation point and $U=B_{\varepsilon}([\infty_{\bf{b}}])$ for some $\varepsilon>0$. Following the proof of Lemma~\ref{lem:shift}, there exists a sequence of isolated points $\left\{{\bf{i}}_{n}\right\}_{n\geqslant 0}$ such that for every $\varepsilon>0$, there is a $T$ such that $d_\mathcal S({\bf{i}}_n,\bf{b})<\varepsilon$ for all $n>T$. For the embedding, this means $[i_n]\in U$. Same as above, we pick any $n>T$ and then for every $a\in\mathcal{A}$, we get that $[i_n]\in\varrho_{\bf{a}}^{i_n+C+1}(a)$, which proves the claim.

Next, we show that the substitution $\varrho^{ }_{\bf{a}}$ is recognizable. Here, we use three facts: 
\begin{enumerate}
        \item Zeros are always at the start of a supertile. (However, there may be level-$k$ supertiles without zeros for some $k\geq 1$.)
        \item  The supertiles $\varrho_{\bf{a}}([0])$ and $\varrho_{\bf{a}}([1])$ contain exactly one non-zero letter each.
        \item  Any supertile other than $\varrho_{\bf{a}}([0])$ and $\varrho_{\bf{a}}([1])$  contains exactly two non-zero letters.
\end{enumerate}
  In particular, all three facts together imply 
 \begin{enumerate}
     \item Runs of consecutive zeros belong to one single level-1 supertile.
     \item A supertile without zeros has exactly two letters.
 \end{enumerate}
  Thus, we can proceed as follows in order to determine level-1 supertiles uniquely:
whenever we see a (maximal) block of zeros, then a level-1 supertile starts with that block and the whole block belongs to that one supertile. 

Given $x\in X_{\varrho^{ }_{\bf a}}$, we show that there is a unique way to decompose it as a  concatenation of level-1 supertiles. First, we look to the right of the origin and look for the minimal $n\in\mathbb{N}$ such that $x_{n-1}\neq [0]$ and $x_n=[0]$. This means $x_n$ is a beginning of a level-1 supertile. We iterate the process to the right and to the left of $x_n$, placing a cut each time we are in such a situation. Between each two consecutive cuts we have a block of zeros followed by some number of non-zero letters. Since every level-1 supertile without initial zeros has exactly two non-zero letters, the initial block of zeros form a level-1 supertile with one or two non-zero letters (depending on the parity of the number of non-zero letters between two consecutive cuts). From this, one gets the unique decomposition into level-1 supertiles. 

The process for level-$k$ decomposition for $k>1$ is similar, where now one has to look at positions where $x_{[n,n+|\varrho^{k-1}_{\bf{a}}([0])|-1]}=\varrho^{k-1}_{\bf{a}}([0])$ and $x_{[n-|\varrho^{k-1}_{\bf{a}}([0])|,n-1]}\neq\varrho^{k-1}_{\bf{a}}([0])$. Since the substitution is injective, the subwords between every two consecutive positions that have this property can be unambiguously cut into level-$k$ supertiles. 

Lastly, we show that the associated (normalized) substitution operator is quasicompact. Here, we want to show that $\varrho^{ }_{\bf{a}}$ satisfies the conditions of Proposition~\ref{prop:quasicompact}. We show that this is satisfied for $k=C+2$, where $C$ is the constant from property \ref{def:a3} and for $F=\{[0],[1],\ldots,[C+1]\}$. 

We look at the images of letters in $\mathcal{A}$ under $\varrho^{C+2}_{\bf{a}}$. Every application of $\varrho^{ }_{\bf{a}}$ at least doubles the number of letters and therefore for all $a\in\mathcal{A}$, $|\varrho^{C+2}_{\bf{a}}(a)|\geq 2^{C+2}$. However, if we apply $\varrho^{ }_{\bf{a}}$ that many times, then at least once one letter will be substituted with three or more letters. Indeed, if $a=[j]$ for some $j\geq 0$, then after first $C+1$ applications of $\varrho_{\bf{a}}$, we will see the letters $[j+1], [j+2], \ldots, [j+C+1]$ and at least one of $a_{j+1}, a_{j+2}, \ldots, a_{j+C+1}$ must be positive by property \ref{def:a3}. The corresponding letter is substituted by at least three letters. The case for $a=[\infty_\mathbf b]$ is similar. This implies $|\varrho^{C+2}_{\bf{a}}(a)|\geq 2^{C+2}+1$ and by Lemma \ref{lem:spec-rad}, $r(M)^{C+2}\geq 2^{C+2}+1$.

On the other hand, we note that for every $n\leq C+1$, the supertile $\varrho^{n}_{\bf{a}}([0])$ only contains letters from $F$, and the supertile $\varrho^{C+2}_{\bf{a}}([0])$ contains only one letter not from $F$. For every other letter $a\neq [0]$, we will repeatedly apply $\varrho^{ }_{\bf{a}}$ $C+2$ times to $a$ but we will color some letters with red in the process. The goal is to show that red letters cannot lead to letters outside of $F$ and there are not too many non-red letters.

Here are the rules for how we color letters with red inductively. The initial letter $a$ is not colored with red. If one applies $\rho^{ }_{\bf{a}}$ to a non-red letter $[i]$ (or appropriate infinity), then the initial zeros of $\rho^{ }_{\bf{a}}([i])$, if any, are colored with red. Additionally, if we apply $\rho^{ }_{\bf{a}}$ to a red letter, then all resulting letters are red. For the sake of lucidity we illustrate the coloring process by an example in Table \ref{tab:ex-red}.

\begin{table}
\[ \mathbf{a}=1,0,1,0,1,0,1,0,1,0,1,0,\ldots\]
\[ \begin{array}{cll}
0 & \mapsto & 01\\
1 & \mapsto & 02\\
2 & \mapsto & 013\\
3 & \mapsto & 24\\
4 & \mapsto & 035\\
5 & \mapsto & 46\\
 & \vdots & \\
 \end{array} \]
For this choice of $\mathbf{a}$, $C=2$ (longest run of 0s is 1), and hence $k=4$. It is clear that $| \varrho^4(a)|>2^4$ for any letter $a$.
Choose $F=\{ [0], [1], [2], [3] \}$.  Four ($k=4$) iterations of $\varrho$ applied to some letter (here: 5) successively yield
\[ 5 \mapsto\ 46 \mapsto \underline{0}35\underline{0}57 \mapsto \underline{0}\underline{1}
2446\underline{0}\underline{1}4668 \mapsto \underline{0}\underline{1}\underline{0}\underline{2}\underline{0}
13\underline{0}35\underline{0}57\underline{0}\underline{1}\underline{0}\underline{2}\underline{0}35\underline{0}57\underline{0}57\underline{0}79\]

\caption{An example of the coloring process in the proof of Theorem \ref{thm:primrecquasi}.
Red letters are underlined. After four iterations of $\varrho$ there remain only twelve unmarked letters
($12\leqslant 2^4$), thus at most twelve letters managed to escape $F$. In fact, only nine out of the twelve unmarked letters --- $5,5,7,5,7,5,7,7,9$ --- actually escaped $F$.
\label{tab:ex-red}}
\end{table}

Note that no red zero can create a letter outside of $F$ because we apply the substitution to red zeros at most $C+1$ times. Also, all red letters originate from red zeros so all letters outside of $F$ will be non-red in the end. But the number of non-red letters at most doubles because every letter is substituted with at most two non-zero letters. Thus, for every isolated point $[i]$, 
\[
\left|\left\{[j]\in\varrho^{C+2}_{\bf{a}}([i])\colon [j]\not\in F\right\} \right| \leqslant 2^{C+2}<2^{C+2}+1\leqslant r(M)^{C+2}, 
\]
and a similar inequality holds for all infinities in $\mathcal A$ due to the continuity of $\varrho_{\bf{a}}$. By Lemma \ref{prop:quasicompact}, this implies the quasicompactness of $M$ (after normalization).
%
\end{proof}

\begin{proof}[Proof of Theorem \ref{thm:dyn}]
According to Theorem \ref{thm:primrecquasi}, the substitution $\varrho_{\mathbf a}$ is primitive 
and the corresponding normalized substitution operator is quasicompact. Applying Theorem~\ref{thm:unique-ergod} to our substitution implies Theorem \ref{thm:dyn}.
\end{proof}


\section{Inflation factor} \label{sec:infl}


The main goal of this section is to give an explicit formula for the inflation factor of the substitution $\varrho^{  }_{\bf{a}}$ provided $\mathbf a$ satisfies conditions \ref{def:a1}--\ref{def:a3}. This is done in Proposition~\ref{prop:length} below. First, we introduce an auxiliary parameter $\mu:=\mu(\mathbf a)$ which is the unique number in the interval $(0,1)$ that satisfies

\begin{equation}\label{eq:mu}
\frac{1}{\mu}=\sum_{i=0}^\infty a_i \mu^i.
\end{equation}

Existence and uniqueness follow from the observation that both parts of the equality are continuous as functions of $\mu$ on $(0,1)$ and the left-hand side is decreasing from $\infty$ to $0$ while the right-hand side is increasing from $a_0$ to $\infty$.
 
With this $\mu$, we set $$\lambda:=\mu+\frac{1}{\mu}>2.$$
Our first goal is to show that we can find an appropriate sequence $\mathbf a$ to get every possible~$\lambda$.

\begin{lem}\label{lem:inflation}
For every prescribed $\lambda>2$ we can find a suitable sequence $\mathbf a$ satisfying conditions \textnormal{\ref{def:a1}--\ref{def:a3}} such that $\lambda=\mu+\frac{1}{\mu}$ and $\mu$ satisfies Eq.~\eqref{eq:mu}.
\end{lem}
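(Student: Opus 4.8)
The plan is to first pin down $\mu$ from $\lambda$ and then realise $1/\mu$ as a digit expansion in powers of $\mu$, reading off the sequence $\mathbf a$ from the digits. Since $\lambda>2$, the quadratic $x^{2}-\lambda x+1=0$ has two positive real roots whose product is $1$, and exactly one of them, namely $\mu=\tfrac12\bigl(\lambda-\sqrt{\lambda^{2}-4}\bigr)$, lies in $(0,1)$; by construction it satisfies $\mu+\tfrac1\mu=\lambda$. It then suffices to construct a sequence $\mathbf a$ obeying \ref{def:a1}--\ref{def:a3} with $\sum_{i\geq0}a_i\mu^{i}=1/\mu$, as this is exactly Eq.~\eqref{eq:mu}. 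I would also note that this $\mu$ is automatically the \emph{unique} solution of Eq.~\eqref{eq:mu} in $(0,1)$, since $x\mapsto\sum_{i\geq0}a_ix^{i+1}$ is continuous and strictly increasing on $(0,1)$ (using $a_0>0$, and that infinitely many $a_i$ are positive by \ref{def:a3}) with limits $0$ and $+\infty$.

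To produce the digits I would run a bounded ``lazy'' expansion. Set $t:=1/\mu>1$ and $B:=t+1$, so $B\mu=1+\mu$. Define remainders by $\rho_0:=t$ and, for $k\geq1$, let $a_k$ be the smallest nonnegative integer keeping $\rho_{k+1}:=(\rho_k-a_k)/\mu\leq B$, that is $a_k=\max\bigl(0,\lceil\rho_k-1-\mu\rceil\bigr)$; to secure \ref{def:a2} I would override the first digit and set $a_0:=\max\bigl(1,\lceil t-1-\mu\rceil\bigr)$. A direct check gives $0<\rho_k\leq B$ for all $k$, so every digit lies in $\{0,1,\dots,\lceil B\rceil\}$, which is \ref{def:a1}, while $a_0\geq1$ is \ref{def:a2}. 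Writing $a_k=\rho_k-\mu\rho_{k+1}$ and telescoping then gives $\sum_{i=0}^{n}a_i\mu^{i}=t-\rho_{n+1}\mu^{\,n+1}$; since $\rho_{n+1}\leq B$ and $\mu^{n+1}\to0$, the partial sums converge to $t=1/\mu$, which is Eq.~\eqref{eq:mu}.

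The hard part will be condition \ref{def:a3}, the boundedness of runs of zeros, and this is precisely what choosing the \emph{smallest} admissible digit is meant to guarantee. The key observation is that a zero digit multiplies the remainder by $1/\mu>1$, while the equality $a_k=0$ forces $\rho_k\leq B\mu=1+\mu$. Hence along any maximal run of zeros the remainder grows geometrically by the factor $1/\mu$ yet must remain in $(0,1+\mu]$, so such a run has length at most $\log_{1/\mu}\bigl((1+\mu)/\rho_{\min}\bigr)+1$, where $\rho_{\min}:=\inf_k\rho_k$. I would finish by checking $\rho_{\min}>0$: once the first nonzero digit occurs, the recursion keeps $\rho_k\in(1,B]$, so the remainder can drop below $1$ only on a finite initial segment (which arises only when $t\leq1+\mu$, where forcing $a_0=1$ leaves $\rho_1=(t-1)/\mu\in(0,1)$), and this finite collection of strictly positive values keeps $\rho_{\min}>0$. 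This bounds every run of zeros by a constant $C=C(\mu)$, giving \ref{def:a3} and completing the construction.
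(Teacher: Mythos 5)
Your proof is correct, and it takes a genuinely different route from the paper's. Both arguments start the same way: take $\mu$ to be the root of $x^2-\lambda x+1$ in $(0,1)$ and reduce the lemma to writing $1/\mu=\sum_{i\geq 0}a_i\mu^i$ with digits obeying \ref{def:a1}--\ref{def:a3}. The paper then splits off an explicit geometric series: it fixes $C$ with $\mu+\mu^C\leq 1$, writes $1/\mu=\sum_{i\geq 0}\mu^{iC}+\mu'$ with $\mu'\geq 0$, and represents the remainder $\mu'$ as $\sum_i c_i\mu^i$ with bounded nonnegative integer digits via a non-constructive maximality argument (each $c_i$ is the largest value taken infinitely often within the surviving family of admissible sequences); conditions \ref{def:a2} and \ref{def:a3} are then automatic, because the geometric series contributes $1$ at every index divisible by $C$. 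You instead run a single-pass lazy expansion of $1/\mu$ itself, taking the smallest admissible digit at each step, securing \ref{def:a2} by overriding $a_0$ and deducing \ref{def:a3} from the remainder dynamics: a zero digit forces $\rho_k\leq 1+\mu$ while multiplying the remainder by $1/\mu>1$, so zero runs are bounded once $\inf_k\rho_k>0$, which you verify, including the one delicate case $1/\mu\leq 1+\mu$ (equivalently $2<\lambda\leq\sqrt{5}$) where the override creates a finite initial segment of remainders at most $1$. Your approach buys an explicit algorithmic construction in which all three conditions emerge from one mechanism; the paper's buys a shorter verification by building \ref{def:a2}--\ref{def:a3} in by hand and only needing bare representability of the remainder. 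Two cosmetic points: when $1/\mu=1+\mu$ exactly, your $\rho_1$ equals $1$ rather than lying in $(0,1)$, and the claim that the remainder stays in $(1,B]$ after the first nonzero digit is valid for lazy digits but not for the overridden $a_0$ --- both already covered by your treatment of the initial segment.
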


\begin{proof}
Let $0< \mu < 1$ be such that $\mu+\dfrac{1}{\mu}=\lambda$. 
First, we choose a positive integer $C$ such that $\mu+\mu^C\leq 1$. Then
$$\frac{1}{\mu}\geq \frac{1}{1-\mu^C}=\sum_{i=0}^\infty\mu^{iC}.$$
Let $\mu':=\frac{1}{\mu}-\frac{1}{1-\mu^C}\geq 0$ be the difference. We will show that it is possible to represent $\mu'$ as a sum of powers of $\mu$ with bounded nonnegative coefficients. The case of $\mu'=0$ is obvious as we can use zeros as the coefficients so we treat only the case $\mu'>0$ below.

Let $N$ be a positive integer such that $$\frac{1}{\mu}\leq \dfrac{N}{1-\mu}=\sum_{i=0}^\infty N\mu^i.$$
We consider the following family $F^0$ consisting of sequences of integers from $\left\{0,1,\ldots,N\right\}$ whose associated $\mu$-series sum is not greater than $\mu'$, i.e.,
$$F^0:=\left\{(b_i)_i\in \{0,1,\ldots,N\}^{\N_0} \colon \sum_{i=0}^\infty b_i\mu^i\leq \mu' \right\}.$$
The family $F^0$ contains infinitely many sequences. We recursively choose the sequence of subfamilies
$$F^0 \supseteq F^1 \supseteq F^2 \ldots$$ such that the $i$th term of every sequence in $F^{i+1}$ is the maximal number $c_i$ in $\{0,1,\ldots,N\}$ with the condition that $F^i$ contains infinitely many sequences with the $i$th term equal to $c_i$. 

We claim that ${\displaystyle \mu'=\sum_{i=0}^{\infty}c_i\mu^i}$. Indeed, the right-hand side cannot be greater than $\mu'$ because for every $i$, the sequence $(c_0,c_1,\ldots,c_i,0,0,\ldots)$ belongs to $F^i$.  

If ${\displaystyle \mu'>\sum_{i=0}^{\infty}c_i\mu^i}$,  we consider two cases. If there are infinitely many $c_i$s that are less than $N$, then for one sufficiently large $i$, we can swap $c_i$ with $c_i+1$ and keep the sum of the series less than $\mu'$. This contradicts the choice of $c_i$ since, in that case, there are infinitely many sequences in $F^i$ with $i$th term equal to $c_i+1 \leq N$.

In the second case, the sequence $(c_i)_i$ stabilizes at the value $N$. Let $j$ be the index such that
$$c_j<N=c_{j+1}=c_{j+2}=\ldots.$$ We claim that the sequence $(c_0,c_1,\ldots,c_{j-1},c_j+1,0,0,\ldots)$ gives the series with sum less than $\mu'$ and therefore $c_j$ was not chosen to be maximal. Indeed, by our assumption ${\displaystyle \mu'>\sum_{i=0}^{\infty}c_i\mu^i=\sum_{i=0}^{j}c_i\mu^i+\sum_{i=j+1}^{\infty}N\mu^i}$. By the choice of $N$, ${\displaystyle \mu^j\leq \sum_{i=j+1}^{\infty}N\mu^i}$. Combining these two inequalities, we get the claim and a contradiction with our assumption that $\mu'$ is greater than the sum of the series.

To finish the proof, we write
$$\frac{1}{\mu}=\sum_{i=0}^\infty\mu^{iC}+\mu'=\sum_{i=0}^\infty\mu^{iC}+\sum_{i=0}^\infty c_i\mu^i.$$
The first series ensures that properties \ref{def:a2} and \ref{def:a3} hold, and the second series guarantees property \ref{def:a1} for the sum.
\end{proof}

In the previous section, we have established that we can apply Theorem \ref{thm:primrecquasi} to our substitution.
In particular, it means there exists a natural tile length function associated with $\varrho^{ }_{\mathbf{a}}$, which is strictly positive.
Next, we show that $\lambda=r(M)$ and give explicit natural lengths for all letters in the subalphabet $\iota_{\ab}(\N_0)=\left\{[i]\colon i\in \N_0\right\}$ of isolated points. Recall that
$$\mathbf A=\left( 
\begin{array}{cccccc}
a_0& a_1+1& a_2 & a_3 & a_4 & \dots\\
1 & 0 & 1 & 0 & 0 & \ldots\\
0 & 1 & 0 & 1 & 0 & \ldots\\
0 & 0 & 1 & 0 & 1 & \ldots\\
0 & 0 & 0 & 1 & 0 & \ldots\\
\vdots & \vdots & \vdots & \vdots & \vdots & \ddots
\end{array}
\right).$$
Define $\ell:\iota_{\ab}(\N_0)\longrightarrow \R^+$ to be $\ell([0]):=1$, and for every $k>0$
\begin{equation}\label{eq:length}
\ell([k])=\mu^k+\sum_{j=1}^k\sum_{i=j}^\infty a_i\mu^{i+k+1-2j}.
\end{equation}

\begin{lem}\label{lem:unifcon}
The function $\ell\/\colon\/\iota_{\ab}(\N_0)\longrightarrow \R^+$ in Eq.~\eqref{eq:length} is uniformly continuous on\/ $\iota_{\ab}(\N_0)$ with respect to the topology on\/ $\mathcal{S}$.
\end{lem}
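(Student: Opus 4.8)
The plan is to establish uniform continuity directly from the metric structure of $\mathcal{S}$, by showing that two letters $[k]$ and $[m]$ that are close in $\mathcal{S}$ must have nearly equal lengths. Recall that $d_{\mathcal{S}}(\mathbf{k},\mathbf{m}) \leq 2^{-p}$ precisely when the embedded sequences $\mathbf{k}=\sigma^k(\mathbf{0})$ and $\mathbf{m}=\sigma^m(\mathbf{0})$ agree on the window $[-p,p]$ around the origin; in terms of the sequence $\mathbf{a}$, this means $a_{k+j}=a_{m+j}$ for all $-p\leq j\leq p$ (where defined). So the core task is to bound $|\ell([k])-\ell([m])|$ by something tending to $0$ as the common window grows. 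The main conceptual step is to rewrite the formula in Eq.~\eqref{eq:length} in a form that localizes the dependence of $\ell([k])$ on the entries $a_i$ near index $k$, so that agreement of $\mathbf{a}$ on a long window forces the lengths to be close.

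First I would simplify the double sum. Reindexing the inner variable and interchanging the order of summation, one can rewrite
\begin{equation*}
\ell([k]) = \mu^k + \sum_{i=0}^{\infty} a_i \, \mu^{i+k+1} \sum_{j=1}^{\min(i,k)} \mu^{-2j}.
\end{equation*}
The geometric sum over $j$ evaluates in closed form, and the key observation is that each coefficient multiplying $a_i$ decays geometrically in $|i-k|$: concretely, for $i \geq k$ the contribution behaves like a constant times $\mu^{\,i-k}$, and for $i < k$ like a constant times $\mu^{\,k-i}$. Using $0<\mu<1$ and the uniform bound $0\leq a_i\leq N$ from \ref{def:a1}, this shows that the ``tail'' contributions to $\ell([k])$ coming from indices $i$ with $|i-k|$ large are uniformly small, independent of $k$. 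This localization is precisely what converts the symbolic closeness in $\mathcal{S}$ into closeness of lengths.

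With the localized estimate in hand, the argument concludes as follows. Given $\epsilon>0$, choose $p$ large enough that the total contribution to $\ell(\cdot)$ from indices at distance more than $p$ from the center is below $\epsilon/3$ (possible by the geometric decay, uniformly in the center index). If $d_{\mathcal{S}}(\mathbf{k},\mathbf{m})\leq 2^{-p}$, then $a_{k+j}=a_{m+j}$ for $|j|\leq p$, so after aligning the two sums the terms with $|i-\text{center}|\leq p$ cancel exactly, while the two tails are each below $\epsilon/3$; together with the $\mu^k$ and $\mu^m$ terms (which are themselves small once $k,m$ are large, and handled separately for small indices since there are only finitely many) this gives $|\ell([k])-\ell([m])|<\epsilon$. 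I would phrase the threshold $p$ uniformly in $k,m$, which is exactly what uniform (as opposed to merely pointwise) continuity requires.

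The main obstacle I anticipate is the bookkeeping in the localization step, especially handling the $\mu^k$ diagonal term and the small-index regime cleanly: when the center indices $k,m$ are small, the geometric tail bound still applies, but one must be careful that the agreement window $[-p,p]$ genuinely forces cancellation of all the relevant coefficients and that the finitely many ``boundary'' cases near index $0$ do not spoil uniformity. Making the decay estimate explicit and uniform in the center index — rather than proving a separate bound for each $k$ — is the delicate part; once the coefficient $\mu^{|i-k|}$-type decay is pinned down with constants independent of $k$, the rest is a routine $\epsilon/3$ argument.
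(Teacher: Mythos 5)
Your overall strategy is the same as the paper's --- translate closeness in $\mathcal{S}$ into agreement of $\mathbf{a}$ on a window around the center index, then split $\ell$ into a window part that cancels and geometrically small tails --- but two concrete steps fail as you have set them up. First, the claimed exact cancellation of the window terms is false in your collapsed form. Carrying out your interchange, the coefficient of $a_i$ in $\ell([k])$ is $\frac{\mu^{|i-k|+1}-\mu^{i+k+1}}{1-\mu^{2}}$, and only the first summand depends on $i-k$ alone. After aligning $\ell([k])$ with $\ell([k+t])$ (replacing $i$ by $i+t$), the terms $\mu^{|i-k|+1}$ cancel, but $\mu^{i+k+1}$ versus $\mu^{i+k+2t+1}$ do not: the aligned window coefficients differ by $\mu^{i+k+1}(1-\mu^{2t})/(1-\mu^{2})$. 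This is precisely why the paper does \emph{not} collapse the $j$-sum: in the double-sum form the exponent $i+k+1-2j$ is invariant under shifting $i$, $j$, and $k$ simultaneously by $t$, so the window part (the part (III) there) cancels identically.

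Second --- and this is what both the repair of the first point and your treatment of the diagonal terms $\mu^{k},\mu^{m}$ hinge on --- you never establish that $d_{\mathcal S}([k],[m])\leq 2^{-p}$ with $k\neq m$ forces $k,m\geq p$ (up to an off-by-one). This holds because all entries of $\sigma^{k}(\mathbf 0)$ at positions to the left of $-k$ are $*$, while the corresponding entries of $\sigma^{m}(\mathbf 0)$ (say $m>k$) are numbers, so agreement on $[-p,p]$ is impossible unless $p\leq k$; the paper records exactly this as ``$n\leq k$''. With this fact your argument goes through: the non-shift-invariant correction in $\ell([k])$ sums over all $i$ to at most $N\mu^{k+1}/\bigl((1-\mu^{2})(1-\mu)\bigr)\leq N\mu^{p+1}/\bigl((1-\mu^{2})(1-\mu)\bigr)$, and $|\mu^{k}-\mu^{m}|\leq \mu^{p}$. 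Without it, your fallback --- ``handle the small indices separately since there are only finitely many'' --- does not make sense for uniform continuity: the issue is not to excuse finitely many letters but to show that a small $k$ and a large $m$ can never be $2^{-p}$-close, which is exactly the star argument. So the gap is repairable, but the repair requires this observation together with replacing ``cancel exactly'' by ``cancel up to a correction of order $\mu^{k}$''.
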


\begin{proof}
Let $[k],[k+t]\in \iota_{\ab}(\N_0)$ be such that $t>0$ and 
$d([k],[k+t])<\frac{1}{2^{n}}$ for some $n\in\N$. We show that $|\ell([k])-\ell([k+t])|<\varepsilon(n)$. First, note that $\sigma^{k}(\mathbf{0})$ and $\sigma^{k+t}(\mathbf{0})$ agree on $[-n,n]$ as the distance between these points in $\mathcal S$ is at most $\frac{1}{2^{n+1}}$. This means 
\begin{equation}\label{eq:n-close}
a_{k+r}=a_{k+t+r} \quad\text{for}\quad -n\leqslant r\leqslant n.
\end{equation} 
Consequently $n\leqslant k$ as $\sigma^k(\mathbf{0})$ has $*$ at the position $-k-1$, but all further iterations of $\sigma$ have non-$*$ entries there.

We can then split the double sum on the right hand-side of Eq.~\eqref{eq:length} into three parts:
\[
\ell([k])=\mu^k+\underbrace{\sum_{j=1}^{k-n}\sum_{i=j}^\infty a_i\mu^{i+k+1-2j}}_{\textnormal{(I)}}+\underbrace{\sum_{j=k-n+1}^k\sum_{i=k+n+1}^\infty a_i\mu^{i+k+1-2j}}_{\textnormal{(II)}}+\underbrace{\sum_{j=k-n+1}^k\sum_{i=j}^{k+n} a_i\mu^{i+k+1-2j}}_{\textnormal{(III)}}.
\]
Note that the component (III) is the same for both $\ell([k])$ and $\ell([k+t])$ because of Eq.~\eqref{eq:n-close}, and hence this component vanishes in $|\ell([k])-\ell([k+t])|$.
We now determine $n$-dependent bounds for (I). 
\begin{align*}
\sum_{j=1}^{k-n}\sum_{i=j}^\infty a_i\mu^{i+k+1-2j}& \leqslant N \sum_{j=1}^{k-n}\sum_{i=j}^\infty \mu^{i+k+1-2j}= N \sum_{j=1}^{k-n}\sum_{i=0}^\infty \mu^{i+k+1-j}\\
&=N \sum_{j=1}^{k-n} \frac{\mu^{k+1-j}}{1-\mu}=\frac{N}{(1-\mu)}(\mu^{n+1}+\cdots+ \mu^k)\leqslant \frac{N\mu^n}{(1-\mu)^2}.
\end{align*}
Here, the first inequality follows from \ref{def:a1}. One can carry out an analogous calculation for (II), which yields
\[
\sum_{j=k-n+1}^{k}\sum_{i=k+n+1}^\infty a_i\mu^{i+k+1-2j}\leqslant \frac{N\mu^{n+2}}{(1-\mu)^2}.
\]
Combining these estimates with the fact that $\mu^{k}-\mu^{k+t}< \mu^n$ gives us
\[
\left|\ell([k])-\ell([k+t])\right|<\left(1+\frac{2N}{(1-\mu)^2}+\frac{2\mu^2 N}{(1-\mu)^2}\right)\mu^n,
\]
from which the claim is immediate as the right-hand side goes to 0 as $n$ grows.
\end{proof}

Recall that $\mathcal{K}$ is the positive cone of $C(\mathcal{A})$.

\begin{prop}\label{prop:length}
The function $\ell$ in Eq.~\eqref{eq:length}
is a left eigenvector of $\mathbf{A}$ corresponding to $\lambda$. Moreover, it extends to a strictly positive, continuous natural length function on $\mathcal{A}$ and $\lambda=r(M)$ is the associated inflation factor. 
\end{prop}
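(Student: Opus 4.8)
The plan is to prove the four assertions in order: that $\ell$ is a left eigenvector of $\mathbf{A}$, that it extends continuously to all of $\mathcal{A}$, that the extension is strictly positive, and finally that its eigenvalue is $r(M)$. Reading the left-eigenvector condition $\sum_{i}\ell([i])\mathbf{A}_{ij}=\lambda\,\ell([j])$ columnwise, it becomes for $j=0$ the relation $a_0+\ell([1])=\lambda$, and for every $j\geq 1$ the three-term relation
\[
a_j+\ell([j-1])+\ell([j+1])=\lambda\,\ell([j]),
\]
where $\ell([0])=1$, and for $j=1$ the entry $\mathbf{A}_{01}=a_1+1$ already absorbs the contribution of $[j-1]=[0]$. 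Thus the eigenvector claim is equivalent to the recurrence $\ell([j+1])=\lambda\ell([j])-\ell([j-1])-a_j$ for $j\geq 1$ together with $\ell([0])=1$ and $\ell([1])=\lambda-a_0$. The initial value follows by evaluating Eq.~\eqref{eq:length} at $k=1$ and using $\sum_{i\geq 1}a_i\mu^i=\tfrac1\mu-a_0$, which is exactly Eq.~\eqref{eq:mu}. For the recurrence I would write $\ell([k])=\mu^k+\sum_{j=1}^k\mu^{k+1-2j}T_j$ with $T_j:=\sum_{i\geq j}a_i\mu^i$, use $\lambda\mu^k=\mu^{k+1}+\mu^{k-1}$, and compute $\lambda\ell([k])-\ell([k-1])-\ell([k+1])$; after telescoping the $T_j$-sums this collapses to $\mu^{-k}(T_k-T_{k+1})=\mu^{-k}a_k\mu^k=a_k$, which is precisely the relation. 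This is a short, self-contained computation and not the main difficulty.

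Next, by Lemma~\ref{lem:unifcon} the function $\ell$ is uniformly continuous on the dense subset $\iota_{\ab}(\N_0)$ of the compact metric space $\mathcal{A}=\overline{\iota_{\ab}(\N_0)}$, so it extends uniquely to a continuous function $\ell\in C(\mathcal{A})$. Since the substitution operator $M$ is bounded on $C(\mathcal{A})$, both $M\ell$ and $\lambda\ell$ lie in $C(\mathcal{A})$ and, by the first step, agree on the dense set of isolated letters $\{[i]\}$; hence $M\ell=\lambda\ell$ on all of $\mathcal{A}$. In particular the eigen-relation $\sum_{b\in\varrho(a)}\ell(b)=\lambda\,\ell(a)$ holds for every $a\in\mathcal{A}$, which is exactly the statement that $\ell$ is a natural length function with inflation factor $\lambda$.

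For strict positivity the point is a uniform lower bound, since the values at accumulation points are limits $\ell([\infty_\mathbf{b}])=\lim_n\ell([i_n])$ with $i_n\to\infty$, and without such a bound the extension could degenerate. Here condition \ref{def:a3} is essential: isolating the $j=k$ term of Eq.~\eqref{eq:length} gives $\ell([k])\geq\sum_{i\geq k}a_i\mu^{\,i-k+1}$, and since any $C+1$ consecutive entries of $\ab$ contain a nonzero one, there is $i^*$ with $k\leq i^*\leq k+C$ and $a_{i^*}\geq 1$, contributing a term $\geq\mu^{\,C+1}$. Together with $\ell([0])=1$ this yields $\ell([k])\geq\mu^{\,C+1}>0$ for all $k$, and the bound passes to the continuous extension, so $\ell(a)\geq\mu^{\,C+1}>0$ for every $a\in\mathcal{A}$; that is, $\ell$ is an interior point of the cone $K$.

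Finally, to identify $\lambda=r(M)$ I would invoke the standard dichotomy for positive operators. On one hand $M\ell=\lambda\ell$ with $\ell\neq0$ makes $\lambda$ an eigenvalue, so $\lambda\leq r(M)$. On the other hand, set $\delta:=\mu^{\,C+1}>0$; every $f\in C(\mathcal{A})$ with $\|f\|_\infty\leq1$ satisfies $-\delta^{-1}\ell\leq f\leq\delta^{-1}\ell$, so applying the positive (hence monotone) operator $M^n$ and using $M^n\ell=\lambda^n\ell$ gives $|M^nf|\leq\delta^{-1}\lambda^n\ell$, whence $\|M^n\|\leq\delta^{-1}\|\ell\|_\infty\lambda^n$ and $r(M)=\lim_n\|M^n\|^{1/n}\leq\lambda$. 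Thus $\lambda=r(M)$, and since the eigenvalue of a natural length function is by definition the inflation factor, $\lambda=r(M)$ is the inflation factor of $\varrho_{\ab}$. The one genuinely delicate step is the strict-positivity bound in the third paragraph, as it is exactly where \ref{def:a3} enters and guarantees that $\ell$ does not degenerate on the infinite part $\mathcal{A}\setminus\iota_{\ab}(\N_0)$; the other steps are either direct computation or standard positive-operator theory. Alternatively, once strict positivity and the eigen-relation are established, one could instead appeal to the uniqueness of the strictly positive natural length function in Theorem~\ref{thm:unique-ergod} to conclude $\lambda=r(M)$.
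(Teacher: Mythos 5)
Your proposal is correct, and while your eigenvector computation and continuous extension agree in substance with the paper, your treatment of strict positivity and of the identity $\lambda=r(M)$ takes a genuinely different, more elementary route. For the eigenvector part, the paper verifies the recurrence $\ell([k])=\lambda\ell([k-1])-\ell([k-2])-a_{k-1}$ by induction on the closed form of Eq.~\eqref{eq:length}; your telescoping with $T_j=\sum_{i\geq j}a_i\mu^i$ is the same cancellation packaged more cleanly. Both arguments then extend $\ell$ by uniform continuity (Lemma~\ref{lem:unifcon}) and density of $\iota_{\ab}(\N_0)$, and obtain $M\ell=\lambda\ell$ on all of $\mathcal{A}$ by continuity. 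The divergence is in the final step: the paper appeals to operator-theoretic machinery --- primitivity and quasicompactness (Theorem~\ref{thm:primrecquasi}, together with \cite[Thm.~4.23(D)]{MRW2} and \cite[Thm.~V.5.2, Prop.~V.5.6]{Sch}) guarantee that $r(M)$ is an eigenvalue with a strictly positive continuous eigenfunction and that $r(M)$ is the \emph{only} eigenvalue of $M$ with an eigenfunction in the positive cone $K$; since the extension of $\ell$ lies in $K$, its eigenvalue $\lambda$ must equal $r(M)$, and strict positivity of $\ell$ comes along from that theory. You instead prove strict positivity by hand, via the explicit bound $\ell\geq\mu^{C+1}$ from condition \ref{def:a3} (precisely the estimate the paper only records later, in the proof of Theorem~\ref{thm:allfactors}, to get uniform discreteness), and then pin down the spectral radius elementarily: $\lambda\leq r(M)$ because $\lambda$ is an eigenvalue, and $r(M)\leq\lambda$ by dominating the unit ball of $C(\mathcal{A})$ by $\delta^{-1}\ell$ with $\delta=\mu^{C+1}$ and invoking the Gelfand formula. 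Your route is self-contained and avoids quasicompactness altogether for this proposition; what the paper's route buys is the identification of $\ell$ as \emph{the} (up to scaling) eigenfunction in the cone $K$, i.e., the uniqueness that feeds into Theorem~\ref{thm:unique-ergod}(i) and Theorem~\ref{thm:dyn}(ii) --- a statement your argument does not produce by itself, but which the proposition as stated does not require.
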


\begin{proof}
We first show that the (infinite) vector $\ell_\mathcal A:=(\ell([0]),\ell([1]),\ell([2]),\ldots)$ of natural tile lengths is a left eigenvector of the matrix $\mathbf A$ corresponding to eigenvalue $\lambda=\mu+\frac{1}{\mu}$, so $\ell_\mathcal A\mathbf A=\lambda \ell_\mathcal A$.

We consider the product in the left-hand side column by column. Particularly, the first column yields 
$a_0\ell([0])+\ell([1])=\lambda \ell([0])$. Since $\ell([0])=1$, one has
$$\ell([1])=\lambda -a_0=\mu+\frac{1}{\mu}-a_0=\mu+\sum_{i=0}^\infty a_i\mu^i-a_0=\mu+\sum_{i=1}^\infty a_i\mu^i,$$
which satisfies the statement of this proposition.

After that, looking at the $k$th column for $k>1$ we get
$$a_{k-1}\ell([0])+\ell([k-2])+\ell([k])=\lambda \ell([k-1]),$$
or
$$\ell([k])=\lambda \ell([k-1])-\ell([k-2])-a_{k-1}.$$

In particular, for $k=2$,
\begin{multline*}\ell([2])=\lambda \ell([1])-\ell([0])-a_1=\left(\mu+\frac{1}{\mu}\right)\left( \mu+\sum_{i=1}^\infty a_i\mu^i\right)-1-a_1=\\
=\mu^2+1+\sum_{i=1}^\infty a_i\mu^{i+1}+\sum_{i=1}^\infty a_i\mu^{i-1}-1-a_1=\\=
\mu^2+\sum_{i=1}^\infty a_i\mu^{i+1}+\sum_{i=2}^\infty a_i\mu^{i-1}=\mu^2+\sum_{j=1}^2\sum_{i=j}^\infty a_i\mu^{i+3-2j}.
\end{multline*}

For all other $k$, the computations can be carried out by induction. Using $\lambda=\mu+\frac{1}{\mu}$ and expanding the product $\lambda\ell([k-1])$ we get
\begin{multline*}\ell([k])=\lambda \ell([k-1])-\ell([k-2])-a_{k-1}=\\=
\left(\mu+\frac{1}{\mu}\right)\left(\mu^{k-1}+\sum_{j=1}^{k-1}\sum_{i=j}^\infty a_i\mu^{i+k-2j}\right)-\left(\mu^{k-2}+\sum_{j=1}^{k-2}\sum_{i=j}^\infty a_i\mu^{i+k-1-2j}\right)-a_{k-1}=\\=
\mu^k+\mu^{k-2}+\sum_{j=1}^{k-1}\sum_{i=j}^\infty a_i\mu^{i+k+1-2j}+\sum_{j=1}^{k-1}\sum_{i=j}^\infty a_i\mu^{i+k-1-2j}-\mu^{k-2}-\sum_{j=1}^{k-2}\sum_{i=j}^\infty a_i\mu^{i+k-1-2j}-a_{k-1}.
\end{multline*}

Here, we can cancel $\mu^{k-2}$ and also cancel the double sum with negative sign leaving only the term with $j=k-1$ from the second double sum. Simplifying, the expression, we get
$$=
\mu^k+\sum_{j=1}^{k-1}\sum_{i=j}^\infty a_i\mu^{i+k+1-2j}+\sum_{i=k-1}^\infty a_i\mu^{i+k-1-2(k-1)}-a_{k-1}.$$
Note that the first term of the second sum is $a_{k-1}$ so we can cancel that as well. Additionally, the power of $\mu$ in the second sum is $i+k-1-2(k-1)=i+k+1-2k$. These adjustments give us the final expression
$$=\mu^k+\sum_{j=1}^{k-1}\sum_{i=j}^\infty a_i\mu^{i+k+1-2j}+\sum_{i=k}^\infty a_i\mu^{i+k+1-2k}=\mu^k+\sum_{j=1}^k\sum_{i=j}^\infty a_i\mu^{i+k+1-2j}$$
as needed.

We now show that $\lambda=r(M)$. Since $\rho^{ }_{\mathbf a}$ is primitive and
$M/r(M)$ is quasicompact, the spectral radius $r(M)$ of the substitution operator $M$ is an eigenvalue with a strictly positive, continuous eigenfunction $\ell$;  see \cite[Thm. 5.4(B)]{MRW2} and  \cite[Thm.~V.5.2 and Prop.~V.5.6]{Sch}. Moreover, $r(M)$ is the only eigenvalue of $M$ with an eigenfunction in $\mathcal{K}$; see \cite[Thm.~V.5.2(iv)]{Sch}.  

Since the subalphabet $\iota_{\mathbf{a}}(\N_0)$ is dense in $\mathcal{A}$ and since $\ell$ is uniformly continuous  on $\iota_{\mathbf{a}}(\N_0)$ by Lemma~\ref{lem:unifcon}, it follows that $\ell$ extends to a unique continuous function on $\mathcal{A}$, which must necessarily be in the positive cone $\mathcal{K}$, and must be an eigenfunction of $M$ corresponding to the eigenvalue $\lambda$. It then follows that $\lambda=\mu+\frac{1}{\mu}$ must be the spectral radius $r(M)$ of the substitution operator. 
\end{proof}

\begin{rem}
The second claim in the previous result is an infinite-alphabet analogue of the Perron eigenvalue condition by Lagarias and Wang for inflation functional equations admitting weak Delone set solutions in $\mathbb{R}^{d}$, which states $\det(A)=\lambda(S)$ where $\lambda(S)$ is the Perron--Frobenius eigenvalue of the subdivision matrix $S$ and $A$ is the inflation map; see~\cite{LW}. 
\end{rem}

Our next goal is to prove Theorem \ref{thm:allfactors}. We start by giving the formal definition of a Delone set, which is needed in the theorem. A point set $\varLambda$ in $\mathbb{R}^d$ is called a Delone set if it is \begin{itemize}
\item uniformly discrete, i.e., there is a positive radius $r$ such that the balls of radius $r$ centered at points of $\varLambda$ form a packing in $\R^d$, and  
\item relatively dense, i.e., there is a positive radius $R$ such that the balls of radius $R$ centered at points of $\varLambda$ form a covering of $\R^d$.
\end{itemize}

\begin{proof}[Proof of Theorem~\textnormal{\ref{thm:allfactors}}]
For a given $\lambda$, we can find an appropriate sequence using the results of Lemma \ref{lem:inflation}. The resulting alphabet is infinite and compact. It remains to show that every element of the subshift gives rise to a Delone set. This follows automatically from the continuity and strict positivity of $\ell$, which means there exist constants $C_1,C_2>0$ such that for every $a\in \mathcal{A}$, $C_1\leq \ell(a)\leq C_2$ where $\ell$ is the natural length function. The lower bound implies that the point set is uniformly discrete, while the upper bound guarantees that it is relatively dense. 


In fact, we get an explicit lower bound from  property \ref{def:a3}, which states that for every $k>0$, at least one of the coefficients $a_k,\ldots,a_{k+C}$ is not zero. This means the internal sum in Eq.~\eqref{eq:length} for $j=k$ is at least 
$$a_k\mu+a_{k+1}\mu^2+\ldots+a_{k+C}\mu^{C+1}.$$
Thus for every letter $[k], k\in\N_0$, $\ell([k])\geq \mu^{C+1}$. 
This completes the proof.
\end{proof}



\begin{rem}
It is worth noting that the substitution on the alphabet $\mathcal{A}$ is always on infinitely many letters but the resulting Delone set may have finite local (geometric) complexity.
For example, if $a_0=2$ and $a_1=a_2=a_3=\ldots=1$, then $\varrho_\mathbf a$ is a constant-length substitution with $\lambda =3$. In this case, the natural lengths of all tiles in $\mathcal{A}$ are $1$ and the resulting Delone set coincides with $\mathbb Z$.

Moreover, if $\mathbf a$ is such that $\lambda$ is an integer, then the lengths of all tiles are integers. This can be seen from the proof of Proposition~\ref{prop:length} for tiles $[k]$ and from continuity of the natural length function for the accumulation points of the alphabet. Therefore, the resulting Delone set is a subset of $\Z$ and is of finite local complexity as well.
\end{rem}

\begin{exam} For every suitable sequence $\mathbf a$, we can construct a Delone set $\varLambda$ with the corresponding geometric inflation symmetry defined by $\lambda$ or its integer power even when $\lambda$ is transcendental.

Let $k$ be an integer such that $\rho^k_{\mathbf a}([0])$ contains the letter $[0]$ in its interior. Once we turn the word  $\rho^k_{\mathbf a}([0])$ into a patch of length $\lambda^k\ell([0])=\lambda^k$ there is an internal segment corresponding to $[0]$ of length 1. If $a_0=1$, we can take $k=2$ and if $a_0>1$, then we can choose $k=1$. We then place the origin strictly inside this unit segment so that the $\lambda^k$-inflation turns the unit segment into the segment of length $\lambda^k$ that correspond to $\rho^k_{\mathbf a}([0])$.
With this choice of the origin the substitution $\rho^k_{\mathbf a}$ applied to $[0]$ repeatedly will give a Delone set $\varLambda$ such that $\lambda^k \varLambda\subset \varLambda$ even when $\lambda$ (and hence $\lambda^k$) is transcendental. 

Note that this does not require one to start with a bi-infinite symbolic fixed point of $\rho$ (which in general need not exist).
Moreover, due to Theorem~\ref{thm:dyn}, the tiling associated to this special Delone set $\varLambda$ generates the geometric hull $(\Omega_\rho,\mathbb R)$ by minimality.
\end{exam}

We next look at (relative) frequencies of tiles in $\mathcal{T}\in \Omega_{\varrho}$ and relative frequencies of letters for elements of the subshift. To do this, we consider the \emph{dual operator} $M^{\prime}$ of the substitution operator. Here $M^{\prime}$ acts on $ C(\mathcal{A})^{\prime}$, where $C(\mathcal{A})^{\prime}$ is the space of continuous real-valued linear functionals on $C(\mathcal{A})$. 
Thus, right eigenvectors of $\mathbf{A}$ are related to the eigenfunctions of the dual operator $M^{\prime}$. We have the following useful observation.

\begin{lem}\label{lem:right-EV}
Let $\mathbf d = (1,\mu,\mu^2,...)^T$ and $\lambda =\mu +\dfrac 1 \mu$, then $$\mathbf {Ad}=\lambda \mathbf d.$$
\end{lem}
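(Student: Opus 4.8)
The plan is to verify the eigenvalue equation $\mathbf{Ad} = \lambda\mathbf{d}$ entrywise, computing the $i$th coordinate $(\mathbf{Ad})_i = \sum_{j\geqslant 0} \mathbf{A}_{ij}\mu^j$ and matching it against $\lambda d_i = \lambda\mu^i$. Since $\mathbf{A}$ is tridiagonal apart from its first row, the verification splits into the top row $i=0$ and the remaining rows $i\geqslant 1$, and in each of the latter cases the target reduces to the elementary identity $\mu^{i-1}+\mu^{i+1} = (\mu+\mu^{-1})\mu^i = \lambda\mu^i$.

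For the rows $i\geqslant 1$ I would simply read off the nonzero entries from the displayed form of $\mathbf{A}$. For $i\geqslant 2$ the only nonzero entries of row $i$ are the two $1$s sitting in columns $i-1$ and $i+1$, giving $(\mathbf{Ad})_i = \mu^{i-1}+\mu^{i+1} = \lambda\mu^i$; the row $i=1$ is handled by exactly the same identity, now using the columns $0$ and $2$. These are all finite sums, so no convergence issues arise in this part.

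The top row is the only place where the defining relation for $\mu$ enters. There one has $(\mathbf{Ad})_0 = a_0 + (a_1+1)\mu + \sum_{j\geqslant 2} a_j\mu^j$, which I would regroup as $\mu + \sum_{j\geqslant 0} a_j\mu^j$. Equation~\eqref{eq:mu} then identifies the series $\sum_{j\geqslant 0} a_j\mu^j$ with $1/\mu$, so that $(\mathbf{Ad})_0 = \mu + 1/\mu = \lambda = \lambda d_0$, as required.

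There is no genuine obstacle here, as the argument is a direct computation; the only points to keep straight are the bookkeeping of the off-diagonal pattern (the $1$s sit symmetrically at columns $i\pm 1$, with the $i=1$ row as a mild boundary case) and the observation that the single infinite series appearing in the top row converges, and hence may be freely regrouped, because $0<\mu<1$ and $\mathbf{a}$ is bounded by condition~\ref{def:a1}.
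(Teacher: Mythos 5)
Your proposal is correct and follows essentially the same route as the paper: a row-by-row verification in which the rows below the first reduce to the identity $\mu^{i-1}+\mu^{i+1}=(\mu+\mu^{-1})\mu^{i}$, and the first row absorbs the extra $\mu$ from the entry $a_1+1$ and then invokes Eq.~\eqref{eq:mu} to identify $\sum_{j\geqslant 0}a_j\mu^{j}$ with $1/\mu$. Your added remark about convergence of the top-row series (from $0<\mu<1$ and boundedness of $\mathbf{a}$) is a harmless extra precaution that the paper leaves implicit.
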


\begin{rem}
Here $(\dots)^T$ denotes the transpose vector (also infinite) in order to make it compatible with the matrix-vector product.
\end{rem}

\begin{proof}

Writing the equation $\mathbf {Ad}=\lambda \mathbf d$ row by row, we obtain 
\[ \mu + \sum_{i=0}^\infty a_i \mu^i =  \mu + \frac{1}{\mu}=\lambda\cdot 1 \]
from the first row. This equation is satisfied by Eq.~\eqref{eq:mu}. 
The $i$th row (for $i \geq 2$) is 
\[ \mu^{i-2}+\mu^i = (\mu + \mu^{-1}) \mu^{i-1}=\lambda \mu^{i-1} , \]
and the claim follows.
\end{proof}

We now get the following consequence regarding relative frequencies of isolated points for any element $x$ in the subshift $X_{\varrho^{ }_{\bf a}}$ generated by $\varrho^{ }_{\mathbf{a}}$.

\begin{prop} 
Let $\mathbf a$ be a sequence that satisfies \textnormal{\ref{def:a1}--\ref{def:a3}}. Then the vector $(1-\mu)\mathbf d$ is the vector of (relative) frequencies of the isolated points and the frequency of every infinity $[\infty_\mathbf b]$ is $0$.
\end{prop}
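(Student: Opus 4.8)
The plan is to establish the two claims—that the frequency vector of isolated points equals $(1-\mu)\mathbf{d}$, and that each accumulation letter $[\infty_\mathbf{b}]$ has frequency zero—by leveraging the unique ergodicity from Theorem~\ref{thm:dyn} together with the right-eigenvector computation of Lemma~\ref{lem:right-EV}. By unique ergodicity, for each letter $a\in\mathcal{A}$ the frequency
\[
\freq(a) := \lim_{n\to\infty} \frac{1}{2n+1}\#\{k\in[-n,n] : x_k = a\}
\]
exists, is independent of $x\in X_{\varrho^{ }_{\mathbf a}}$, and is given by integrating the indicator (or its continuous analogue) against the unique invariant measure. The standard mechanism is that the vector of frequencies is, up to normalization, the right Perron eigenvector of the substitution operator: it is the eigenfunction of the dual operator $M'$ lying in the positive cone, normalized so that the total mass is one. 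This is precisely the interpretation flagged in the paragraph preceding the statement, where $\mathbf{A} = M'|_{\iota_{\ab}(\N_0)}$ and right eigenvectors of $\mathbf{A}$ correspond to eigenfunctions of $M'$.

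First I would invoke Lemma~\ref{lem:right-EV} to identify $\mathbf{d} = (1,\mu,\mu^2,\dots)^T$ as the right eigenvector of $\mathbf{A}$ to the eigenvalue $\lambda = r(M)$ (the equality $\lambda = r(M)$ coming from Proposition~\ref{prop:length}). Since frequencies are determined by the unique positive-cone eigenfunction of $M'$ up to scaling, the frequencies of the isolated points are proportional to the entries of $\mathbf{d}$, i.e.\ $\freq([k]) = c\,\mu^k$ for some normalizing constant $c>0$. The constant is fixed by the requirement that all frequencies sum to one: since $\sum_{k\geq 0}\mu^k = \frac{1}{1-\mu}$, one gets $c = 1-\mu$ provided the infinities carry no mass. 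This yields $\freq([k]) = (1-\mu)\mu^k$, which is exactly the claim $(1-\mu)\mathbf{d}$.

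The genuine obstacle is justifying that the accumulation letters contribute zero frequency, which is what makes the normalization $c=1-\mu$ legitimate. I would argue this via quasicompactness: since $T = M/r(M)$ is quasicompact (Theorem~\ref{thm:primrecquasi}(iii)) and $\varrho^{ }_{\mathbf a}$ is primitive and recognizable, the unique invariant measure $\nu$ on $\mathcal{A}$ arising from the eigenfunction of $M'$ must be supported on the set of isolated points. Concretely, the frequency of a letter equals the $\nu$-measure of the corresponding singleton, and for an accumulation point $[\infty_\mathbf{b}]$ the singleton $\{[\infty_\mathbf{b}]\}$ has empty interior in $\mathcal{A}$. One shows $\nu(\{[\infty_\mathbf{b}]\})=0$ by bounding, for each ball $B_\varepsilon([\infty_\mathbf{b}])$, the frequency of its letters in any supertile: since $\iota_{\ab}(\N_0)$ is dense and $\sum_k (1-\mu)\mu^k = 1$ already accounts for the full mass, no mass remains for the accumulation points. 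Equivalently, because the isolated-point frequencies $(1-\mu)\mu^k$ already sum to $1$, and the total mass is $1$, countable additivity forces the measure of the closed (measure-zero-boundary) set of infinities to vanish.

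Finally I would verify consistency with the substitution structure as a sanity check: applying the frequency relation to $\varrho^{ }_{\mathbf a}([k])$ and using that $\varrho$ inflates lengths by $\lambda$ while preserving the relative counts encoded in $\mathbf{A}$, the eigenvector equation $\mathbf{Ad} = \lambda\mathbf{d}$ is exactly the self-consistency condition for these frequencies under the renormalization by $\lambda$. The main subtlety to handle carefully is the interchange of the limit defining frequencies with the infinite sum over $k$, which is controlled by condition~\ref{def:a1} (boundedness of $\mathbf{a}$, hence of $N$) ensuring geometric decay of $\mu^k$ and uniform convergence; this is the same mechanism that underlies the uniform continuity established in Lemma~\ref{lem:unifcon}.
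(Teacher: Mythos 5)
Your overall route is the same as the paper's: identify the letter frequencies with the unique (up to scale) strictly positive eigenfunctional of the dual operator $M^{\prime}$, use Lemma~\ref{lem:right-EV} together with $\lambda = r(M)$ (Proposition~\ref{prop:length}) to conclude that on the isolated letters this functional is proportional to $\mathbf{d}$, and then normalize. However, the step you yourself single out as ``the genuine obstacle'' --- that the accumulation letters carry no mass --- is exactly where your argument is circular, so there is a genuine gap. You fix the constant $c$ in $\freq([k]) = c\mu^{k}$ by requiring total mass one \emph{provided the infinities carry no mass}, obtaining $c = 1-\mu$; you then argue that the infinities carry no mass \emph{because} ``the isolated-point frequencies $(1-\mu)\mu^{k}$ already sum to $1$.'' Nothing in this rules out the consistent alternative in which the accumulation set carries mass $m\in(0,1)$ and $c = (1-m)(1-\mu)$, so that the isolated frequencies sum only to $1-m$; countable additivity is perfectly compatible with that. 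Likewise, density of $\iota_{\ab}(\N_0)$ and the fact that $\{[\infty_{\mathbf{b}}]\}$ has empty interior prove nothing: a finite Borel measure can certainly have an atom at an accumulation point of the atoms it already has.

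What is missing is a structural argument that pins down the eigenfunctional on all of $\mathcal{A}$, not only on the isolated letters. The paper does this by showing that the assignment $\phi(\bbo_{[i]}) := \mathbf{d}_i$ already determines a unique continuous, strictly positive linear form on $C(\mathcal{A})$ (strict positivity of any form that is positive on all $\bbo_{[i]}$, plus a density claim for the subalgebra generated by the $\bbo_{[i]}$), which by one-dimensionality of the positive eigenspace of $M^{\prime}$ must be the frequency functional; the vanishing at the infinities and the value $c = 1-\mu$ then come out simultaneously. A short alternative, much closer in spirit to your (correct) intuition that quasicompactness is responsible, is to evaluate the eigen-equation $M^{\prime}\nu = \lambda\nu$ (an identity of finite Borel measures) on the set $\mathcal{A}_{\infty} := \mathcal{A}\setminus\iota_{\ab}(\N_0)$: by Eq.~\eqref{eq:subs-extend} every accumulation letter produces exactly two accumulation letters, while by Definition~\ref{def:substitution} isolated letters produce none, so $\lambda\,\nu(\mathcal{A}_{\infty}) = 2\,\nu(\mathcal{A}_{\infty})$, and $\lambda>2$ forces $\nu(\mathcal{A}_{\infty}) = 0$. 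Some such argument has to precede the normalization step; as written, your proposal does not contain one.
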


\begin{proof}

From primitivity and quasicompactness, it follows that the dual operator $M^{\prime}$ also has a one-dimensional eigenspace corresponding to $\lambda=r(M)$, which is spanned by a strictly positive linear form $\phi$ in the dual space $C(\mathcal{A})^{\prime}$ \cite[Prop.~III.8.5(c)]{Sch}. Here, strict positivity means $\phi$ assigns a positive number to non-zero elements of the positive cone $\mathcal{K}$ of $C(\mathcal{A})$, i.e., $\phi(f)>0$ for $0\neq f\in \mathcal{K}$. 

Consider the functions of the form $\bbo_{[i]}\in \mathcal{K}$ given by $\bbo_{[i]}(a)=1$ if $a=[i]$ and $0$ otherwise, where $[i]\in\mathcal{A}$ is an isolated point. It follows that $\phi(\bbo_{[i]})>0$. By the Riesz--Markov representation theorem, $\phi$ corresponds to a unique regular Borel measure $\nu:=\nu^{(\phi)}$ on the alphabet $\mathcal{A}$. With respect to this identification, one has $\nu_i:=\phi(\bbo_{[i]})=\nu(\{[i]\})>0$. 

Since $\phi$ is an eigenfunctional of $M^{\prime}$, we get 
\[
\phi(M\bbo_{[i]})=(M^{\prime}\phi)(\bbo_{[i]})=\lambda\phi(\bbo_{[i]})=\lambda\nu_i. 
\]
Fix $[i]\neq [0]$. From the definition of $M$, we get 
\begin{align*}
    \phi(M\bbo_{[i]})&=\int_{\mathcal{A}} (M\bbo_{[i]})(a)\,\dd\nu(a)=\int_{\mathcal{A}} \sum_{b\,\triangleleft\, \varrho(a)}\bbo_{[i]}(b)\,\dd\nu(a)\\
    &=\nu_{i{-}1}\cdot \left(\sum_{b\,\triangleleft\, \varrho([i{-}1])}\bbo_{[i]}(b)\right) +\nu_{i{+}1}\cdot \left(\sum_{b\,\triangleleft\, \varrho([i{+}1])}\bbo_{[i]}(b)\right)\\
    &=\nu_{i{-}1}+\nu_{i{+}1}.
\end{align*}
Here, the third equality holds because the letter $[i]$ only appears in $\varrho([i{-}1])$ and $\varrho([i{+}1])$. 
For the letter $[0]$, we have the relation
\begin{equation}\label{eq:meas-zero}
\lambda \nu_0=\int_{\mathcal{A}} \sum_{b\,\triangleleft\, \varrho(a)}\bbo_{[0]}(b)\,\dd\nu(a)\geqslant \sum_{i\in \mathbb{N}} \nu_i\cdot\sum_{b\,\triangleleft\, \varrho([i])}\bbo_{[0]}(b)= \nu_1+\sum_{i=0}^{\infty} a_i\nu_i.
\end{equation}
Note that the values of $\nu_0$ and $\nu_1$ completely determine the values of  $\nu_i$ for all $i\geqslant 2$. Now choose $\frac{\nu_1}{\nu_0}=\mu$. This forces $\nu_i=\mu^i\nu_0$. From Lemma~\ref{lem:right-EV}, we get that 
\[
\lambda \nu_0= \nu_1+\sum_{i=0}^{\infty} a_i\nu_i, 
\]
which implies that the inequality in Eq.~\eqref{eq:meas-zero} is indeed an equality. In particular, this implies that the measure $\nu$ is atomic and only gives positive weights to isolated points, i.e., $\nu(\infty_{\mathbf{b}})=0$ for any accumulation point $\infty_{\mathbf{b}}$ . Otherwise, this would then define a new measure $\nu^{\prime}$ that necessarily corresponds to a \emph{different} eigenfunctional $\phi^{\prime}$, which is not a rescaled version of $\phi$, contradicting the uniqueness of $\phi$.

Now fix an isolated point $[i]\in\mathcal{A}$. 
Note that, by some abuse of notation, one can view $\bbo_{[i]}\in C(\mathcal{A})$ as a continuous function on the set of prototiles (which is indexed by $\mathcal{A}$).
Let $\mathfrak{t}_{[i]}$ be the tile associated to $[i]$ (with length $\ell([i])$). Using this, one builds the tiling dynamical system $(\Omega,\mathbb{R})$ from the subshift. 
From unique ergodicity, every tiling $\mathcal{T}\in \Omega$ is generic with respect to the unique invariant measure $\mu$, which is induced by the sequence $\{\nu_n\}_{n\geqslant 0}$ of volume-normalised, transition-consistent measures on the set $\mathcal{P}_n$ of level-$n$ supertiles; see \cite{FS} and \cite{MRW2} for details. Since we have recognizability, one can use this sequence of measures to calculate certain averages of (measurable) functions on level-$n$ supertiles. Here, $\nu_0=\frac{1}{\phi(\ell)}\nu$, where $\phi(\ell)$ is the volume normalisation constant. In particular, for the function $\bbo_{[i]}$, one has 
\begin{equation}\label{eq:freq-formula}
\lim_{R\to\infty}\frac{1}{2R+1} \#(\mathfrak{t}_{[i]}\text{ in } \mathcal{T}_{[-R,R]})=\int_{a\in \mathcal{A}} \bbo_{[i]}(a)\, \dd \nu_0(a)=\frac{1}{\phi(\ell)}\int_{a\in \mathcal{A}} \bbo_{[i]}(a)\, \dd \nu(a)=\frac{\phi(\bbo_{[i]})}{{\phi(\ell)}}.  
\end{equation}
Note that this value is the same as the \emph{abstract frequency} $\text{freq}_{\mu}(I)$  of the set $I=\{\mathfrak{t}_{[i]}\}$ with respect to $\mu$. One can derive this frequency from the ergodic theorem by considering the function $\chi_{[i],U}\in C(\varOmega)$ given by 
\[
\chi_{[i],U}(\mathcal{T})=\begin{cases}
    1,& \text{ if } \mathfrak{t}_{[i]} \text{ is at }\mathcal{T}-v, \text{ for some } v\in U,\\
    0, & \text{otherwise},
\end{cases}
\]
for any small enough open neighbourhood $U$ of $0$. This is nothing but the characteristic function of the set  
$\varOmega^{ }_{[i],U}$, which comprises of all tilings with tile $\mathfrak{t}_{[i]}$ at the origin (up to some small displacement $v\in U$). One then has
\[
\text{freq}_{\mu}(I)=\frac{\mu(\varOmega^{ }_{I,U})}{\text{vol}(U)}=\frac{\phi(\bbo_{[i]})}{\phi(\ell)}.
\]
The claim on the \emph{relative frequencies} follows from the appropriate normalisation so that the sum of all relative frequencies of isolated points equals $1$. 
\end{proof}

\begin{rem}
It is a non-trivial statement that the relative frequency of the tile $\mathfrak{t}_{[i]}$ in $\varOmega$ is the same as the (relative) letter frequency of $[i]$ for every element of the subshift $X_{\varrho}$ (which also exists uniformly due to unique ergodicity). To see this, one identifies $(\Omega,\mathbb{R})$ with suspension flow 
$(Y,\mathbb{R})$ of $(X_{\varrho},\sigma)$, with roof function $h\colon X_{\varrho}\to \mathbb{R}_{>0}$, $h(x)=\ell(x_0)$,
where $Y=(X_{\varrho}\times \mathbb{R})/\sim$, Here, the equivalence relation is given by the standard identification $(x,\ell(x_0))\sim (\sigma(x),0)$. 
 To compute for letter frequencies in $X_{\varrho}$, one exploits the one-to-one correspondence between the space $\mathcal{M}(X_{\rho})$ of $\sigma$-invariant measures on  $X_{\rho}$ and the space $\mathcal{M}(\Omega)$ of $\mathbb{R}$-invariant measures on $\Omega$. We refer to \cite{AK} for details on suspension flows and \cite{BBG} for relations between invariant measures and transverse measures.
\end{rem}

\begin{rem} 
The formula given in Eq.~\eqref{eq:freq-formula} admits a general version for frequencies of patches in $\mathcal{T}$; see \cite{FS}. This can be used to compute for relative frequencies of finite words in the subshift. 
Note that any legal word with $\infty_{\mathbf{b}}$ has frequency zero, so their corresponding supertiles also have frequency zero.
We then compute the abstract frequencies of the corresponding patches in $\mathcal{T}$ via the formula 
\[
\text{freq}_{\mu}(\mathcal{P})=\lim_{n\to \infty} \sum_{[i]\in \mathbb{N}_0} \#(\mathcal{P}\text{ in } \varrho^{n}(\mathfrak{t}_{[i]}))\,\frac{\dd\nu(\{[i]\})}{\lambda^n},
\]
where we denote by $\varrho^{n}(\mathfrak{t}_{[i]}))$ the level-$n$ (geometric) supertile corresponding to the tile $\mathfrak{t}_{[i]}$. For simple cases, this 
limit stabilises, giving an explicit formula in finitely many steps. The relative frequencies, (which in turn are also the relative frequencies of the corresponding words) can be obtained by finding the appropriate normalisation. 

Let $\widetilde{\mu}$ be the unique invariant measure on $X_{\rho_{\mathbf{a}}}$. One can then use the computations above to associate a measure to any neighbourhood of a legal word $w$ that contains an accumulation point. As an example, for 
$[\infty_{\mathbf{b}}][0]\in \mathcal{L}$, one has

\[
\widetilde{\mu}(B_{\varepsilon}(\left\{[\infty_{\mathbf{b}}][0]\right\}))=\frac{1}{C}\sum_{[i]\in B_{\varepsilon}(\infty_{\mathbf{b}})} \text{freq}_{\mu}(\{\mathfrak{t}_{[i]}\mathfrak{t}_{[0]}\}),\] 
for any accumulation point $\infty_{\mathbf{b}}$, where $C$ is a statistical normalisation constant.

\end{rem}

\section{Periodic sequences} \label{sec:periodic}


The main goal of this section is to show that if the sequence $\mathbf a$ is eventually periodic, then both the associated parameter $\mu$ and the inflation factor $\lambda$ are algebraic. The converse does not hold and we provide an explicit example of a rational inflation factor that cannot be achieved in this framework using an eventually periodic $\mathbf a$.

\begin{prop}
If $\mathbf a$ is eventually periodic, then $\mu$ and $\lambda$ are algebraic.
\end{prop}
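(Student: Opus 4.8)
The plan is to show that the defining relation~\eqref{eq:mu} for $\mu$ becomes a polynomial (or rational-function) identity once $\mathbf a$ is eventually periodic, so that $\mu$ is a root of a polynomial with integer coefficients, and then to transfer algebraicity from $\mu$ to $\lambda=\mu+\tfrac1\mu$. Suppose $\mathbf a$ is eventually periodic, say $a_{i+p}=a_i$ for all $i\geq i_0$, with period $p$. The key step is to split the power series $\sum_{i=0}^\infty a_i\mu^i$ from Eq.~\eqref{eq:mu} into an initial finite part $\sum_{i=0}^{i_0-1}a_i\mu^i$ plus a tail $\sum_{i=i_0}^\infty a_i\mu^i$ over the periodic regime. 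The periodic tail is a geometric-type sum: grouping it into blocks of length $p$, one finds
\[
\sum_{i=i_0}^\infty a_i\mu^i=\frac{\mu^{i_0}}{1-\mu^{p}}\sum_{r=0}^{p-1}a_{i_0+r}\mu^{r},
\]
since each block contributes a fixed polynomial $\sum_{r=0}^{p-1}a_{i_0+r}\mu^{r}$ scaled by successive powers of $\mu^{p}$. (The condition $0<\mu<1$ from Section~\ref{sec:infl} guarantees convergence, so these manipulations are legitimate.)

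Next I would substitute this closed form back into Eq.~\eqref{eq:mu}, namely $\tfrac1\mu=\sum_{i=0}^\infty a_i\mu^i$, obtaining
\[
\frac{1}{\mu}=\sum_{i=0}^{i_0-1}a_i\mu^i+\frac{\mu^{i_0}}{1-\mu^{p}}\sum_{r=0}^{p-1}a_{i_0+r}\mu^{r}.
\]
Clearing the denominators $\mu$ and $1-\mu^{p}$ turns this into a polynomial equation $P(\mu)=0$ with integer coefficients, where $P$ is obtained by multiplying through by $\mu(1-\mu^{p})$. This polynomial is not identically zero (for instance its degree is genuinely raised by the $\mu\cdot(-\mu^p)$ term, or one simply notes that $\mu\in(0,1)$ cannot be a root of the zero polynomial in a vacuous sense), so $\mu$ is an algebraic number.

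Finally, algebraicity of $\lambda$ follows immediately: $\lambda=\mu+\mu^{-1}$ lies in the field $\Q(\mu)$, which is a finite extension of $\Q$ because $\mu$ is algebraic, and any element of a finite extension of $\Q$ is algebraic. The main obstacle, such as it is, is purely bookkeeping: one must handle the purely periodic versus eventually periodic cases uniformly (the initial segment $i<i_0$ and the possibility $i_0=0$), and verify that after clearing denominators the resulting polynomial $P$ is nonzero so that ``$\mu$ is a root of $P$'' is a nontrivial statement. Neither of these is a genuine difficulty, and the summation identity for the periodic tail is the only computational ingredient.
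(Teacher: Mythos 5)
Your proof is correct and takes essentially the same approach as the paper: split the series in Eq.~\eqref{eq:mu} at the start of the periodic regime, sum the periodic tail as a geometric series to get a rational expression in $\mu$, and multiply through by $\mu(1-\mu^{p})$ to exhibit $\mu$ as a root of an integer polynomial, whence $\lambda=\mu+\tfrac{1}{\mu}$ is algebraic as well. Your extra check that the resulting polynomial is nonzero is a fine (if easily dispatched) addition --- its constant term is $-1$ --- which the paper leaves implicit but in fact uses in the example that follows.
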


\begin{proof}
We assume that the sequence $(a_i)_i$ is periodic starting with index $j$ with period $k$. So if $i\geq j$, then $a_{i+k}=a_i$. Then we can write
\begin{multline}\label{eq:algebraic}
\frac{1}{\mu}=\sum_{i=0}^\infty a_i\mu^i=a_0+\ldots+a_{j-1}\mu^{j-1}+\sum_{i=j}^\infty a_i\mu^i=\\=
a_0+\ldots+a_{j-1}\mu^{j-1}+(a_j\mu^{j}+\ldots+a_{j+k-1}\mu^{j+k-1})\sum_{m=0}^{\infty}\mu^{mk}=\\=a_0+\ldots+a_{j-1}\mu^{j-1}+\frac{a_j\mu^{j}+\ldots+a_{j+k-1}\mu^{j+k-1}}{1-\mu^k}.\end{multline}
Multiplying by $\mu(1-\mu^k)$ we get that $\mu$ is a root of a polynomial with integer coefficients and hence algebraic.
Since $\lambda=\mu+\frac{1}{\mu}$, it is algebraic as well.
\end{proof}

\begin{exam}
Let us take $\mu=\frac25$ and $\lambda = \frac{29}{10}$.

We note that Eq.~\eqref{eq:algebraic} results in a polynomial with integer coefficients and constant term~$1$. For the choice of $\mu$ above, no polynomial with integer coefficients and root $\mu$ can have constant term~$1$. This means that every sequence $\mathbf a$ that results in $\lambda = \frac{29}{10}$ as the inflation factor cannot be eventually periodic.
\end{exam}

Another feature that shows a much clearer distinction between periodic and non-periodic sequences $\mathbf a$ is the structure of the accumulation points in the alphabet. If $\mathbf a$ is eventually periodic, then one can establish that the set $\mathcal{A}\setminus \iota_{\ab}(\N_0)$ is finite. On the other hand, if $\mathbf a$ is not eventually periodic, the family of accumulation points is infinite and, in particular, can be uncountable.


%

\section{Example with transcendental inflation factor}\label{sec:trans}

In this section, we give an explicit substitution whose inflation factor $\lambda$ is transcendental. Consider the Thue--Morse sequence $\mathbf{t}=(t_n)_{n \geq 0}$ with $t_n:=(-1)^{s_2(n)}\in\left\{-1,1\right\}$, where $s_2(n)$ is the number of ones in the binary expansion of $n$, see \cite[\texttt{A010060}]{OEIS}. Consider the generating function $T(z):=\sum_{n\geqslant 0} t_nz^n$, which is a transcendental power series over $\mathbb{Q}(z)$; see \cite{Mahler}. Mahler proved the following result regarding values of $T$
on algebraic parameters. 

\begin{thm}[\cite{Mahler}]\label{thm:Mahler}
Let $\alpha\neq 0$ be an algebraic number with $|\alpha|<1$. Then the number $T(\alpha)$ is transcendental.
\end{thm}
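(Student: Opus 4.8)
The plan is to establish transcendence of $T(\alpha)$ by invoking Mahler's method, whose central mechanism is the functional equation satisfied by the Thue--Morse generating function. First I would record the recurrence $t_{2n}=t_n$ and $t_{2n+1}=-t_n$, which follows directly from the definition $t_n=(-1)^{s_2(n)}$ since appending a $0$ to the binary expansion of $n$ leaves $s_2$ unchanged while appending a $1$ increases it by one. Splitting the defining series $T(z)=\sum_{n\geqslant 0} t_n z^n$ according to the parity of the index and substituting these relations yields the key functional equation
\[
T(z)=(1-z)\,T(z^2),
\]
valid as an identity of formal power series (and, after noting convergence for $|z|<1$, as an analytic identity on the open unit disc).

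Next I would set up the arithmetic part of Mahler's machinery. The map $z\mapsto z^2$ is the relevant substitution; starting from an algebraic $\alpha$ with $0<|\alpha|<1$, iterating gives the sequence $\alpha,\alpha^2,\alpha^4,\ldots,\alpha^{2^k},\ldots$, all of which are algebraic and tend to $0$. The heart of the method is a transcendence measure argument: assuming for contradiction that $T(\alpha)$ were algebraic, one uses the functional equation to control the heights and denominators of the iterates $T(\alpha^{2^k})$, constructs an auxiliary polynomial in $z$ and $T(z)$ that vanishes to high order, and then derives a contradiction via a Liouville-type inequality bounding a nonzero algebraic number from below against an upper bound forced by the vanishing. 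The hypothesis $|\alpha|<1$ is exactly what makes the iterates shrink geometrically, so that the upper bounds beat the lower bounds.

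The main obstacle, and indeed the main content, is this transcendence estimate; it is precisely the statement proved by Mahler \cite{Mahler} and surveyed in \cite{Adam}. Since the paper invokes Theorem~\ref{thm:Mahler} as an external cited result, I would not reproduce the full estimate. Instead, the proof reduces to verifying the single hypothesis of Mahler's theorem that is specific to our function, namely that $T$ satisfies a Mahler-type functional equation with algebraic coefficients (here $(1-z)$) under an integer power substitution ($z\mapsto z^2$) and that $T$ is itself transcendental over $\mathbb{Q}(z)$. The transcendence of $T$ over $\mathbb{Q}(z)$ follows because any algebraic power series over $\mathbb{Q}(z)$ has eventually periodic-type coefficient behaviour incompatible with the non-eventually-periodic Thue--Morse sequence, a fact also recorded in \cite{Mahler}. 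Thus the entire statement follows by checking the functional equation above and citing Theorem~\ref{thm:Mahler} for the conclusion that $T(\alpha)$ is transcendental whenever $\alpha\neq 0$ is algebraic with $|\alpha|<1$.
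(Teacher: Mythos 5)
The paper does not prove this statement at all: it is imported wholesale as Mahler's theorem, with \cite{Mahler} as the proof. Your proposal is structurally consistent with that — you derive the functional equation $T(z)=(1-z)\,T(z^{2})$ (correctly, from $t_{2n}=t_{n}$, $t_{2n+1}=-t_{n}$; the paper records the same equation in its closing remark of Section~6) and then defer the transcendence machinery to the citation, which is exactly what the paper does.

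There is, however, one supporting claim in your write-up that is false as stated and would fail if taken literally: it is not true that ``any algebraic power series over $\mathbb{Q}(z)$ has eventually periodic-type coefficient behaviour.'' For instance, $\sqrt{1+z}=\sum_{n\geqslant 0}\binom{1/2}{n}z^{n}$ is algebraic over $\mathbb{Q}(z)$ with coefficients that are in no sense eventually periodic; and, more to the point, Christol's theorem shows that the reduction of $T$ modulo $2$ \emph{is} algebraic over $\mathbb{F}_{2}(z)$ precisely because the Thue--Morse sequence is $2$-automatic, so no purely combinatorial ``non-periodicity'' argument can work — characteristic $0$ and integrality must enter. The standard repair is Fatou's theorem: a power series with integer coefficients and radius of convergence $1$ that is algebraic over $\mathbb{Q}(z)$ must be rational; and a rational function whose integer coefficients are bounded has eventually periodic coefficients (its poles lie at roots of unity), contradicting the aperiodicity of Thue--Morse. (Alternatively one can invoke Szeg\H{o}'s dichotomy for power series taking finitely many coefficient values: rational, or natural boundary on the unit circle, hence transcendental.) Since the paper itself also sources the transcendence of $T$ over $\mathbb{Q}(z)$ from \cite{Mahler}, your proposal is acceptable as a citation-level argument, but the specific justification you gave for that hypothesis is the one genuine gap.
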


We would like to leverage this result and use a modified version of $t_n$ as our sequence which will define a substitution. To this end, let 
\begin{equation}\label{eq:thue-morse}
\mathbf{a}=(a_n)_{n \geq 0} \,\,\mbox{ with } a_n:=(3+t_n)/2,
\end{equation}
which yields the (strictly positive) Thue--Morse sequence where $1$ is replaced by $2$ and $-1$ is replaced by $1$. The sequence obviously satisfy conditions \ref{def:a1}--\ref{def:a3} as it has no zeros and is bounded. One can easily verify that the generating function $A(z)$ for the sequence $a_n$ satisfies
\begin{equation}\label{eq:gen-function}
A(z)=\frac{3}{2}\cdot\frac{1}{1-z}+\frac{1}{2}T(z).
\end{equation}
We now have the following result. 

\begin{thm}\label{thm:TM-transc}
Let $\mathbf{a}$ be the sequence given in Eq.~\eqref{eq:thue-morse} and consider the corresponding substitution $\varrho^{ }_{\mathbf{a}}$. One then has 
\begin{enumerate}
\item[\textnormal{(i)}] The subshift $(X_{\varrho_{\bf a}},\sigma)$ and the tiling dynamical system $(\Omega_{\varrho_{\bf a}},\mathbb{R})$ are strictly ergodic. 
\item[\textnormal{(ii)}] There exists a unique natural tile length function for $\varrho_{\mathbf{a}}$ which is strictly positive.
\item[\textnormal{(iii)}] The inflation factor $\lambda$ corresponding to $\varrho_{\mathbf{a}}$ is transcendental.
\end{enumerate}
\end{thm}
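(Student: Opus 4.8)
The plan is to verify the three parts essentially as applications of results established earlier in the paper, with the genuinely new content concentrated in part (iii). First I would observe that the sequence $\mathbf{a}$ defined in Eq.~\eqref{eq:thue-morse} takes values in $\{1,2\}$, so it is bounded with $N=2$, giving condition \ref{def:a1}. Since $a_0=(3+t_0)/2=2\neq 0$, condition \ref{def:a2} holds. For \ref{def:a3}, I would note that the modified Thue--Morse sequence has all entries strictly positive (they are $1$ or $2$), so there are no runs of zeros at all; one may take $C=1$. Hence $\mathbf{a}$ satisfies \ref{def:a1}--\ref{def:a3}, and Theorem~\ref{thm:primrecquasi} applies: $\varrho_{\mathbf{a}}$ is primitive, recognizable, and has quasicompact normalized substitution operator.

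With these properties in hand, parts (i) and (ii) follow formally. For (ii), I would invoke Theorem~\ref{thm:dyn}(ii) (equivalently Theorem~\ref{thm:unique-ergod}(i)), which gives the unique, strictly positive natural tile length function. For (i), strict ergodicity means minimality together with unique ergodicity; minimality comes from primitivity via Proposition~2.6(ii), while unique ergodicity of both $(X_{\varrho_{\mathbf{a}}},\sigma)$ and $(\Omega_{\varrho_{\mathbf{a}}},\mathbb{R})$ comes from Theorem~\ref{thm:dyn}(i) and (iii). So (i) and (ii) are immediate consequences of the general machinery applied to this particular $\mathbf{a}$.

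The heart of the proof is part (iii), where I would use Proposition~\ref{prop:length} to identify $\lambda=\mu+\tfrac1\mu$ with the inflation factor, where $\mu\in(0,1)$ is the unique solution of $\tfrac1\mu=\sum_{i\geq 0}a_i\mu^i=A(\mu)$. Rewriting via Eq.~\eqref{eq:gen-function}, the defining equation becomes
\begin{equation*}
\frac{1}{\mu}=\frac{3}{2}\cdot\frac{1}{1-\mu}+\frac{1}{2}T(\mu),
\end{equation*}
so that $T(\mu)=\tfrac2\mu-\tfrac{3}{1-\mu}$, which is a rational function of $\mu$ with rational coefficients. The strategy is to argue by contradiction: if $\lambda$ were algebraic, then $\mu$ (a root of $x^2-\lambda x+1=0$) would be algebraic as well, and $\mu\in(0,1)$ with $\mu\neq 0$. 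But Theorem~\ref{thm:Mahler} (Mahler's theorem) asserts that $T(\alpha)$ is transcendental for every nonzero algebraic $\alpha$ with $|\alpha|<1$. Applying this with $\alpha=\mu$ would force $T(\mu)$ to be transcendental, contradicting the fact that $T(\mu)=\tfrac2\mu-\tfrac{3}{1-\mu}$ is an algebraic number (being a rational function with rational coefficients evaluated at the algebraic number $\mu$). This contradiction shows $\mu$ is transcendental, and hence $\lambda=\mu+\tfrac1\mu$ is transcendental as well.

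I expect the main subtlety to be purely bookkeeping rather than deep: one must confirm that $\mu$ genuinely lies in the open unit disk (it does, since $\mu\in(0,1)$ by the construction of $\lambda=\mu+\tfrac1\mu>2$) and that $\mu\neq 0$, so that Mahler's hypotheses are met exactly. A secondary point to state cleanly is the elementary fact that if $\lambda$ is algebraic then so is $\mu$, and conversely, which follows since $\lambda$ and $\mu$ are related by the polynomial equation $\mu^2-\lambda\mu+1=0$ with integer coefficients in the two quantities. No hard estimates are needed; the transcendence is entirely inherited from Mahler's method applied to the Thue--Morse generating function.
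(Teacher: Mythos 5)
Your proposal is correct and takes essentially the same approach as the paper: parts (i) and (ii) follow from Theorem~\ref{thm:dyn} once \ref{def:a1}--\ref{def:a3} are checked for the modified Thue--Morse sequence, and part (iii) combines Mahler's Theorem~\ref{thm:Mahler} with the identity $T(\mu)=\tfrac{2}{\mu}-\tfrac{3}{1-\mu}$ to reach a contradiction. The only cosmetic difference is that you place the contradiction hypothesis on $\lambda$ rather than on $\mu$ directly (the paper assumes $\mu$ algebraic), which is logically equivalent since $\mu$ and $\lambda$ are interalgebraic via $\mu^{2}-\lambda\mu+1=0$.
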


\begin{proof}
The first two claims follow directly from Theorem~\ref{thm:dyn} since $\varrho_{\mathbf{a}}$ is primitive
and has a quasicompact substitution operator (after normalization). It remains to show that $\lambda$ is transcendental. 

We know that $\lambda=\mu+\frac{1}{\mu}$, where $\mu\in (0,1)$ is the unique solution to Eq.~\eqref{eq:mu}, which means $\frac{1}{\mu}=A(\mu)$. It suffices to show that $\mu$ is transcendental. From Eq.~\eqref{eq:gen-function}, one has 
\[
\frac{1}{\mu}=A(\mu)=\frac{3}{2}\cdot\frac{1}{1-\mu}+\frac{1}{2}T(\mu). 
\]
Suppose on the contrary $\mu$ is algebraic. Then, by Theorem \ref{thm:Mahler}, $T(\mu)$ is transcendental because $\mu\in (0,1)$.

On the other hand, $T(\mu)=\frac{2}{\mu}-\frac{3}{1-\mu}$ is algebraic which is a contradiction. Thus $\mu$ is transcendental and so is $\lambda$, as $\mu$ is the solution of the equation $x^{2}-\lambda x +1=0$.
%
\end{proof}

\begin{rem}
The substitution described in the previous theorem and a patch of the tiling it generates are available online at \cite{Enc} under the title ``Tiling with Transcendental Inflation Multiplier''. Here $\mathbf a=1,2,2,1,2,1,1,2,\ldots$. This particular sequence yields $\mu=0.3753\ldots$. The corresponding inflation factor is $\lambda=2.7899\ldots$. The natural lengths of the first few letters are
$$\ell([0])=1, \quad
\ell([1])=1.7899\ldots, \quad
\ell([2])=1.9937\ldots, \quad
\ell([3])=1.7724\ldots, \quad
\ell([4])=1.9510\ldots.$$
As an illustration, the third iteration of the substition applied to $[0]$ is
$$\varrho^3_{\mathbf a}([0])=[0][1][0][0][0][2][0][1][0][1][0][1][0][0][1][3].$$
\end{rem}

In fact, adding a nonnegative periodic sequence $\mathbf{b}=(b_n)_{n\geqslant 0}$ to $\mathbf{a}$ gives rise to another sequence whose corresponding $\lambda$  is also transcendental. To see this, suppose  the smallest period of $\mathbf{b}$ is $p$. The generating function for $\mathbf{b}$ is then \[B(z)=\frac{b_0+b_1 z+\cdots +b_{p-1}z^{p-1}}{1-z^p}\] where the $b_i$s are (nonnegative) integers. Considering $\mathbf{a}+\mathbf{b}$, and inserting $\mu$ in the generating function, we get
\[
\frac{1}{\mu}=(A+B)(\mu)=\frac{3}{2}\cdot\frac{1}{1-\mu}+\frac{1}{2}T(\mu)+\frac{b_0+b_1 \mu+\cdots +b_{p-1}\mu^{p-1}}{1-\mu^p}, 
\]
where the last term is clearly algebraic whenever $\mu$ is algebraic. We arrive at the same contradiction, and hence $\lambda$ must again be transcendental. The same argument extends when $\mathbf{b}$ is eventually periodic. This provides a way to construct an explicit infinite subfamily with transcendental $\lambda$.

\begin{rem}
We note that, in general, the transcendence of the generating function $A(z)$ over the field $\mathbb{Q}(z)$ does not suffice to conclude that $A(\mu)$ is a transcendental number whenever $\mu\in (0,1)$ is algebraic. 
The Thue--Morse power series $T(z)$ satisfies the functional equation $T(z)=(1-z)T(z^2)$. For power series satisfying similar functional equations (called Mahler-type functional equations), dichotomy results regarding the transcendence of numbers of the form $A(\mu)$ 
are available; see \cite[Thm.~5.3]{Adam}. We refer the reader to \cite{Adam} for a survey on Mahler's method on transcendence and linear independence results. 
\end{rem}

\section{Concluding remarks} \label{sec:remarks}

There is an alternative definition of quasicompactness which involves the \emph{essential spectrum}. For the substitution operator $M$, this is equivalent to the essential spectral radius $r_{\textnormal{ess}}(M)$ being strictly less than $r(M)$. We conjecture that $r_{\textnormal{ess}}(M)=2$ for all substitutions we constructed in this work which satisfy the conditions  \textnormal{\ref{def:a1}--\ref{def:a3}}. This has a variety of implications in discrepancy estimates, in particular to questions regarding bounded distance equivalence, which we explore in another work \cite{FGM}. 

It also remains to find the right conditions on the inflation factor $\lambda$ which guarantee or exclude
the existence of measurable or continuous eigenvalues for either dynamical system
$(X_{\varrho_{\bf a}},\sigma)$ or $(\Omega_{\varrho_{\bf a}},\mathbb{R})$. Note that convergence of points $x_n\to x$ in $X_{\varrho_{\bf a}}$ is more subtle than in shifts over finite alphabets, where converging points must necessarily agree on a large patch around the origin. In the infinite local complexity regime, there are more mechanisms for convergence which do not require exact agreement. There are results on weaker versions of the Pisot--Vijayaraghavan problem; see \cite{B2} for instance. It would be interesting to see whether such numbers with biased distribution mod $1$ give rise to substitutions with interesting spectral features.





\section*{Acknowledgments} 
We extend our gratitude to two anonymous reviewers for insightful comments that helped improve the paper. The authors would  also like to thank Michael Baake, Michael Coons, Dan Rust, and Jamie Walton for valuable discussions. In particular, we thank Michael Coons for references to Mahler's method.

\bibliographystyle{amsplain}


\begin{dajauthors}
\begin{authorinfo}[df]
  Dirk Frettl\"oh\\
  Bielefeld University\\
  Bielefeld, Germany\\
  dfrettloeh\imageat{}techfak\imagedot{}de \\
  \url{https://www.math.uni-bielefeld.de/~frettloe/}
\end{authorinfo}
\begin{authorinfo}[ag]
  Alexey Garber\\
  University of Texas Rio Grande Valley\\
  Brownsville, TX, USA\\
  alexey\imagedot{}garber\imageat{}utrgv\imagedot{}edu \\
  \url{https://faculty.utrgv.edu/alexey.garber/}
\end{authorinfo}
\begin{authorinfo}[nm]
  Neil Ma\~nibo\\
  Bielefeld University\\
  Bielefeld, Germany, and\\
  Open University\\
  Milton Keynes, UK\\
  cmanibo\imageat{}math\imagedot{}uni-bielefeld\imagedot{}de\\
  neil\imagedot{}manibo\imageat{}open\imagedot{}ac\imagedot{}uk\\
  \url{https://www.math.uni-bielefeld.de/~cmanibo/}
\end{authorinfo}
\end{dajauthors}

\end{document}